\documentclass[pdflatex, a4paper]{amsart}

\usepackage[ngerman, english]{babel}
\usepackage[macce]{inputenc}
\usepackage{lmodern}
\usepackage{amsmath}
\usepackage[usenames,dvipsnames]{color}
\usepackage{amssymb}
\usepackage{mathrsfs}
\usepackage{amsthm}
\usepackage{tikz}
\usepackage{tikz-cd}
\usepackage{braket}
\usepackage{mathtools}
\usepackage{graphicx}

\definecolor{darkblue}{rgb}{0,0,.5}
\definecolor{darkgreen}{rgb}{0,0.5,0}
\usepackage[colorlinks=true, linkcolor=darkblue, citecolor = darkgreen]{hyperref}
\usepackage{microtype}


\newtheorem*{thm}{Main Theorem}
\newtheorem{Theorem}{Theorem}
\newtheorem{Lemma}{Lemma}
\newtheorem{Corollary}{Corollary}
\newtheorem{Proposition}{Proposition}

\theoremstyle{definition}
\newtheorem{Definition}{Definition}


\newcommand{\N}{\mathbb N}

\newcommand{\R}{\mathbb R}



\DeclareMathOperator{\colim}{colim}

\newcommand{\catname}[1]{\mathbf{#1}}


\title{On the infinite loop space structure of the cobordism category}
\author{Hoang Kim Nguyen}

\begin{document}
	\begin{abstract}
		We show an equivalence of infinite loop spaces between the classifying space of the cobordism category, with infinite loop space structure induced by taking disjoint union of manifolds, and the infinite loop space associated to the Madsen-Tillmann spectrum.
	\end{abstract}
\maketitle
\tableofcontents

\section{Introduction} 
\label{sec:introduction}

In this article we show that there is an equivalence of infinite loop spaces between the classifying space of the $d$-dimensional cobordism category $B \mathrm{Cob}_\theta(d)$ and the 0-th space of the shifted Madsen-Tillmann spectrum $MT \theta(d)[1]$. This extends a result by Galatius, Madsen, Tillmann and Weiss \cite{gmtw}, who showed an equivalence of topological spaces
\[
   B\mathrm{Cob}_\theta(d) \simeq MT \theta(d)[1]_0.
\]
Note that both spaces in the equivalence above admit an infinite loop space structure. The symmetric monoidal structure on the cobordism category, given by disjoint union of manifolds, induces an infinite loop space structure on $B\mathrm{Cob}_\theta(d)$, while the infinite loop space structure on $MT\theta(d)[1]_0$ comes from it being the 0-th space of an $\Omega$-spectrum. We will show that the equivalence of \cite{gmtw} actually extends to an equivalence of infinite loop spaces with the above mentioned infinite loop space structures. 

In more detail, our proof will rely on certain spaces of manifolds introduced by Galatius and Randal-Williams \cite{galatiusrandal}, which form an $\Omega$-spectrum denoted here by $\psi_\theta$. Using these spaces, they obtain a new proof of the result of \cite{gmtw}, which we record as the following theorem.

\begin{Theorem}\label{thm1}
	There are weak homotopy equivalences of spaces
	\[
	 B\mathrm{Cob}_\theta(d) \simeq \psi_{\theta,0} \simeq MT\theta(d)[1]_0.
\]
\end{Theorem}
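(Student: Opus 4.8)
The plan is to obtain both weak equivalences of Theorem~\ref{thm1} from the spaces of manifolds of \cite{galatiusrandal}. Recall that for $0 \le k \le n \le \infty$ one has the space $\psi_\theta(n,k)$ of pairs $(M,\ell)$, where $M \subseteq \mathbb{R}^n$ is a $d$-dimensional submanifold without boundary, closed as a subset of $\mathbb{R}^n$ and contained in $\mathbb{R}^k \times (-1,1)^{n-k}$, and $\ell$ is a $\theta$-structure on $M$; one sets $\psi_\theta(\infty,k) = \colim_n \psi_\theta(n,k)$, and $\psi_\theta$ is the $\Omega$-spectrum whose $k$-th space is $\psi_\theta(\infty, k+1)$, so that in particular $\psi_{\theta,0} = \psi_\theta(\infty,1)$. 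I will establish the two equivalences $B\mathrm{Cob}_\theta(d) \simeq \psi_\theta(\infty,1)$ and $\psi_\theta(\infty,1) \simeq MT\theta(d)[1]_0$ in turn, in both cases following \cite{galatiusrandal}.

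For the first I would compare the two sides through an auxiliary semi-simplicial space $D_\bullet$ of \emph{cut manifolds}: a $k$-simplex of $D_\bullet$ is a pair $\bigl(M, (t_0 < \dots < t_k)\bigr)$ with $M \in \psi_\theta(\infty,1)$ such that $M$ is cylindrical near each level $\{x_1 = t_i\}$ of the first coordinate, the face maps deleting the $t_i$. Forgetting the cuts gives a map $\|D_\bullet\| \to \psi_\theta(\infty,1)$, and the first key point is that it is a weak equivalence: given a compact family of manifolds in $\psi_\theta(\infty,1)$ one produces, after passing to a fine enough subdivision of the parameter space, a compatible system of cuts --- a parametrised transversality statement of Sard type. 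On the other hand, a tuple of cuts exhibits the part of $M$ between $t_0$ and $t_k$ as a string of $k$ composable $\theta$-cobordisms, while the two semi-infinite ends $M|_{(-\infty,t_0]}$ and $M|_{[t_k,\infty)}$ contribute only contractible choices; this produces a levelwise weak equivalence between $D_\bullet$ and the nerve $N_\bullet \mathrm{Cob}_\theta(d)$ of the topological cobordism category, hence $\|D_\bullet\| \simeq B\mathrm{Cob}_\theta(d)$.

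For the second equivalence I would use scanning. The scanning construction sends a manifold $M$ to the section $x \mapsto (\text{germ of } M \text{ at } x)$ of a bundle over $\mathbb{R}^n$. The space of germs at a point of a $\theta$-manifold deformation retracts --- by linearising a tubular neighbourhood --- onto the empty germ together with the $\theta$-structured affine $d$-planes, which is weakly equivalent to the $n$-th space of $MT\theta(d)$, a Thom space of a complement bundle over a Grassmannian. Since the manifolds in $\psi_\theta(n,0)$ are compact, scanning then identifies $\psi_\theta(n,0)$ with the space of compactly supported sections over $(-1,1)^n$ of this bundle, so that in the limit $\psi_\theta(\infty,0) \simeq \Omega^\infty MT\theta(d)$. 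To pass up the tower one shows that the scanning maps $\psi_\theta(\infty,k) \to \Omega\psi_\theta(\infty,k+1)$ are weak equivalences for all $k \ge 0$, by performing surgery on a family of manifolds, compatibly in the parameter, so as to make it cylindrical in one coordinate direction. This exhibits $\psi_\theta$ as an $\Omega$-spectrum and identifies it with $MT\theta(d)[1]$; in particular $\psi_\theta(\infty,1) \simeq \Omega^{\infty-1}MT\theta(d) = MT\theta(d)[1]_0$, which together with the previous paragraph proves the theorem.

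I expect the main obstacle to be the surgery argument showing that the scanning maps $\psi_\theta(\infty,k) \to \Omega\psi_\theta(\infty,k+1)$ are weak equivalences --- equivalently, that $\psi_\theta$ is an $\Omega$-spectrum --- since performing surgery continuously in a family, on manifolds that are merely closed as subsets and hence possibly non-compact, while keeping control of the $\theta$-structures, is the technically demanding step; the parametrised-cuts argument of the first equivalence is of a similar but lighter flavour. As both are carried out in detail in \cite{galatiusrandal}, in practice this section will recall and reorganise those results rather than reprove them from scratch.
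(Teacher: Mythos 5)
Your treatment of the first equivalence is essentially the paper's: the model of $\mathrm{Cob}_\theta(d)$ used here is exactly the topological poset of pairs $(t,M)$ with $t$ a regular value of the height function on $M \in \psi_\theta(\infty,1)$, and the statement that the forgetful (cut-deleting) map $B\mathrm{Cob}_\theta(d) \to \psi_\theta(\infty,1)$ is a weak equivalence is \cite[Theorem 3.10]{galatiusrandal}, recorded as Theorem \ref{maingal}; your semi-simplicial space of cut manifolds is just this nerve.

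The second equivalence, however, is anchored at the wrong end of the tower, and both statements you anchor it with are false. The delooping $\psi_\theta(n,k) \to \Omega\psi_\theta(n,k+1)$ is a weak equivalence only for $k \geq 1$ --- this is precisely the range in \cite[Theorem 3.20]{galatiusrandal} (Theorem \ref{adj} above); the proof needs at least one already non-compact direction in which to push manifolds off to infinity. For $k=0$ the map $\psi_\theta(\infty,0) \to \Omega\psi_\theta(\infty,1)$ is a group-completion-type map, not an equivalence: $\pi_0\psi_\theta(\infty,0)$ is the monoid of closed $\theta$-manifolds under disjoint union, which is not a group, whereas $\pi_0\Omega\psi_\theta(\infty,1) \cong \pi_0\Omega^\infty MT\theta(d)$ is. For the same reason scanning does not identify the space of compact manifolds $\psi_\theta(n,0)$ with a space of compactly supported sections: already for $d=0$ this would assert $\coprod_k B\Sigma_k \simeq QS^0$, i.e.\ Barratt--Priddy--Quillen without the group completion. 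The correct argument anchors at the top of the tower instead: $\psi_\theta(n,n) = \Psi_\theta(\R^n)$ is weakly equivalent to the Thom space $Th(\theta_n^\ast\gamma_{d,n}^\perp)$ by retracting a manifold onto its germ at the origin (\cite[Theorem 3.22]{galatiusrandal}, Theorem \ref{maingalran2}), and one then deloops downward, stopping at $k=1$, to get $\psi_\theta(n,1) \simeq \Omega^{n-1}Th(\theta_n^\ast\gamma_{d,n}^\perp)$ and, letting $n \to \infty$, $\psi_\theta(\infty,1) \simeq MT\theta(d)[1]_0$. Your intended conclusion only involves $k \geq 1$ and so survives, but as written the base case of your induction sits at the one value of $k$ where the mechanism breaks --- and the fact that it breaks there is the content of the theorem, which identifies $B\mathrm{Cob}_\theta(d)$ with $\Omega^{\infty-1}MT\theta(d)$ rather than with the space of closed manifolds.
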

In this article, we will show that the equivalences above come from equivalences of spectra.

Instead of directly constructing an equivalence of spectra, our strategy will be to construct $\Gamma$-spaces $\Gamma \mathrm{Cob}_\theta(d)$ and $\Gamma \psi_\theta$ with underlying spaces $B\mathrm{Cob}_\theta(d)$ and $\psi_{\theta,0}$ respectively and we show that $\Gamma \psi_\theta$  is a model for the connective cover of the spectrum $\psi_\theta$, denoted by $\psi_{\theta, \geq 0}$. This $\Gamma$-structure will be induced by taking disjoint union of manifolds. We then show that their associated spectra have the stable homotopy type of the connective cover of the shifted Madsen-Tillmann spectrum denoted by $MT\theta(d)[1]_{\geq 0}$, by constructing a $\Gamma$-space model for $MT\theta(d)[1]_{\geq 0}$ and exhibiting an equivalence of $\Gamma$-spaces. But more is true, we will see that the equivalences of theorem \ref{thm1} are the components of this equivalence of $\Gamma$-spaces and hence the main result of this article will be the following.

\begin{thm}
	There are stable equivalences of spectra
	\[
	\mathbf B \Gamma \mathrm{Cob}_\theta(d) \simeq \psi_{\theta,\geq 0} \simeq MT\theta(d)[1]_{\geq 0}
\]
such that the induced weak equivalences of spaces
\[
\Omega^\infty \mathbf B \Gamma \mathrm{Cob}_\theta(d) \simeq \Omega^\infty \psi_\theta \simeq \Omega^\infty MT \theta(d)[1]_{\geq 0}
\]
are equivalent to the weak equivalences of theorem \ref{thm1}.
\end{thm}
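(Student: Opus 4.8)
The plan is to realise all three spectra as the spectra associated to special $\Gamma$-spaces, to exhibit a zig-zag of levelwise weak equivalences between these $\Gamma$-spaces refining the zig-zag of Theorem \ref{thm1}, and finally to identify each associated spectrum with the relevant connective cover. First I would equip $\mathrm{Cob}_\theta(d)$ with its symmetric monoidal structure given by disjoint union and apply Segal's construction: for a finite pointed set $S$ let $\Gamma\mathrm{Cob}_\theta(d)(S)$ be the classifying space of the topological category whose objects and morphisms are $S$-indexed ``sums'' of those of $\mathrm{Cob}_\theta(d)$, glued over subsets of $S$. This is automatically a special $\Gamma$-space with $\Gamma\mathrm{Cob}_\theta(d)(1_+)=B\mathrm{Cob}_\theta(d)$. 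For the spaces of manifolds I would set $\Gamma\psi_\theta(S)$ to be the space of $\theta$-manifolds in the GRW model of $\psi_{\theta,0}$ equipped with a locally constant labelling $\ell\colon W\to S$ whose preimage of the basepoint is empty; a map $f\colon S\to T$ acts by deleting the components sent to the basepoint, and the ``fold'' map $2_+\to 1_+$ recovers disjoint union. Since $\mathbb R^\infty$ provides unlimited room, the $|S\setminus *|$ labelled pieces can be pushed into disjoint slabs and deformed independently, so the Segal maps $\Gamma\psi_\theta(S)\to\psi_{\theta,0}^{\,S\setminus *}$ are weak equivalences and $\Gamma\psi_\theta$ is special with $\Gamma\psi_\theta(1_+)=\psi_{\theta,0}$. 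The same construction applied levelwise to the whole $\Omega$-spectrum $\psi_\theta$ produces, for each $n$, a special $\Gamma$-space with underlying space $\psi_{\theta,n}$.

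Next I would check that the Galatius–Randal-Williams proof of Theorem \ref{thm1} is compatible with disjoint union. Every space in their zig-zag — the space of manifolds with cylindrical ends, the semi-simplicial resolution of the cobordism category, and the nerve of $\mathrm{Cob}_\theta(d)$ — carries a disjoint-union $\Gamma$-structure built exactly as above, and each of their comparison maps respects it; applying $\Gamma(-)$ and invoking Theorem \ref{thm1} at each stage yields a zig-zag of levelwise weak equivalences of $\Gamma$-spaces
\[
\Gamma\mathrm{Cob}_\theta(d)\;\simeq\;\Gamma\psi_\theta .
\]
For the Madsen–Tillmann side I would take the scanning map of \cite{galatiusrandal}, which already realises $\psi_\theta\simeq MT\theta(d)[1]$ on the level of spectra, and verify that it commutes with disjoint union up to coherent homotopy — scanning is ``additive'' in the relevant sense — so that it upgrades to an equivalence $\Gamma\psi_\theta\simeq\Gamma MT\theta(d)[1]$, where the right-hand side is a $\Gamma$-space model for $MT\theta(d)[1]_{\geq 0}$ (for instance the Segal $\Gamma$-space $S\mapsto(MT\theta(d)[1]_0)^{\,S\setminus *}$, or a Thom-space model of the connective cover).

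It remains to identify $\mathbf B$ of these $\Gamma$-spaces with connective covers, and I expect this to be the main obstacle: one must show that the iterated bar-construction delooping of $\Gamma\psi_\theta$ agrees with the geometric ``add a Euclidean coordinate'' delooping built into the $\Omega$-spectrum $\psi_\theta$. I would argue that for $n\geq 1$ the space $\psi_{\theta,n}$ carries both the disjoint-union monoid structure and, since $\psi_\theta$ is an $\Omega$-spectrum, a loop-concatenation structure; an Eckmann–Hilton argument, made operadic, shows the two $E_\infty$-structures agree, hence the disjoint-union $E_\infty$-space $\psi_{\theta,0}$ is group-like and its canonical delooping is $\psi_\theta$ itself. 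By the adjunction between connective spectra and grouplike $E_\infty$-spaces this produces a map of connective spectra $\mathbf B\Gamma\psi_\theta\to\psi_\theta$ factoring through $\psi_{\theta,\geq 0}$, which is an isomorphism on $\pi_n$ for $n\geq 0$ (both sides connective with $0$-th space $\psi_{\theta,0}$), hence a stable equivalence $\mathbf B\Gamma\psi_\theta\simeq\psi_{\theta,\geq 0}$.

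Combining this with the two zig-zags of $\Gamma$-space equivalences gives the stable equivalences
\[
\mathbf B\Gamma\mathrm{Cob}_\theta(d)\;\simeq\;\psi_{\theta,\geq 0}\;\simeq\;MT\theta(d)[1]_{\geq 0}.
\]
As a by-product $B\mathrm{Cob}_\theta(d)$ is group-like for disjoint union, so no group completion intervenes and $\Omega^\infty\mathbf B\Gamma\mathrm{Cob}_\theta(d)\simeq B\mathrm{Cob}_\theta(d)$. Finally, since the zig-zags above refine that of Theorem \ref{thm1}, evaluating the $\Gamma$-space maps at $1_+$ — equivalently, applying $\Omega^\infty$ — recovers precisely the weak equivalences of Theorem \ref{thm1}, which is the last assertion of the statement.
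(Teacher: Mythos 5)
Your proposal follows essentially the same route as the paper: label components of manifolds by non-basepoint elements of a finite pointed set (the paper packages this as a $\theta(m_+)$-structure for $\theta(m_+)=\coprod_m\theta$), prove specialness by pushing the labelled pieces into disjoint slabs of $\R^\infty$, check that the Galatius--Randal-Williams comparison maps (the forgetful map from the nerve of the cobordism category and the Thom-space map $Th(\theta_n^\ast\gamma_{d,n}^\perp)\to\Psi_\theta(\R^n)$) are strictly $\Gamma$-equivariant, and then identify the associated spectra with connective covers. The one step you defer --- reconciling the bar-construction delooping of the disjoint-union structure with the geometric ``add a coordinate'' delooping --- is indeed where the real work lies, and your sketch of it (``Eckmann--Hilton, made operadic'') is not yet a proof: Eckmann--Hilton identifies the two products up to homotopy, but to transport the equivalence $\mathbf B\Gamma\psi_\theta\simeq\psi_{\theta,\ge 0}$ one needs the interchange of the two structures to be coherent. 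The paper achieves this by recording the whole construction as a projectively fibrant special $\Gamma$-\emph{spectrum} $m_+\mapsto\psi_{\theta(m_+)}$ (the translation maps strictly commute with relabelling), and then invoking the May--Thomason ``up-and-across'' lemma together with their replacement functor $W$; that is the precise form of your ``operadic Eckmann--Hilton'' and is the content of the paper's key lemma. With that step filled in as in the paper, your argument goes through.
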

Here, $\mathbf B\Gamma \mathrm{Cob}_\theta(d)$ is the spectrum associated to the symmetric monoidal category $\mathrm{Cob}_\theta(d)$.
We would like to mention that a similar argument has been given by Madsen and Tillmann in \cite{madsentillmann} for the case $d=1$.

This article is organized as follows. In the next section we recall some basic notions on spectra and $\Gamma$-spaces. This will also serve to fix notation and language. In section \ref{sec:spacesofmanifolds} and \ref{sec:the_weak_homotopy_type_of_psi__theta_infty_1} we review the proof of theorem \ref{thm1} of \cite{galatiusrandal}. In section \ref{sec:gamma_space_models} we will construct $\Gamma$-space models for the spectra $\psi_\theta$ and $MT\theta(d)$ and in section \ref{sec:equivalence_of_gamma_space_models} we will show that these $\Gamma$-spaces are equivalent. Finally in section \ref{sec:the_cobordism_category}, we will relate these $\Gamma$-spaces to the cobordism category with its infinite loop space structure induced by taking disjoint union of manifolds.\\
\\
\textsc{Acknowledgements} The author would like to thank Ulrich Bunke, John Lind and George Raptis for may helpful conversations and especially Ulrich Bunke for his encouragement to write this article. Furthermore, the author gratefully acknowledges support from the Deutsche Forschungsgesellschaft through the SFB 1085 \emph{Higher Invariants}.

\section{Conventions on spectra and $\Gamma$-spaces} 
\label{sec:spectra}

By a space we mean a compactly generated weak Hausdorff space. We denote by $\catname{S}$ the category of spaces and by $\catname S_\ast$ the category of based spaces. We fix a model for the circle by setting $S^1:= \R \cup \{\infty\}$.

We will work with the Bousfield-Friedlander model of sequential spectra, see \cite{bousfield} or \cite{mmss}. Recall that a spectrum $E$ is a sequence of based spaces $E_n \in \catname S_\ast$, $n\in \N$ together with structure maps
\[
s_n: S^1 \wedge E_n \to E_{n+1}.
\]
A map of spectra $f: E \to F$ is a sequence of maps $f_n : E_n \to F_n$ commuting with the structure maps. We denote by $\catname{Spt}$ the category of spectra. A stable equivalence is a map of spectra inducing isomorphisms on stable homotopy groups. An $\Omega$-spectrum is a spectrum $E$, where the adjoints of the structure maps $\Sigma E_n \to E_{n+1}$ are weak homotopy equivalences. There is a model structure on $\catname{Spt}$ with weak equivalences the stable equivalences and fibrant objects the $\Omega$-spectra. Moreover, a stable equivalence between $\Omega$-spectra is a levelwise weak homotopy equivalence. We obtain a Quillen adjunction
\[	
\Sigma^\infty : \catname S_\ast \leftrightarrow \catname{Spt} : \Omega^\infty
\]
where $\Sigma^\infty$ takes a based space to its suspension spectrum and $\Omega^\infty$ assigns to a spectrum its 0-th space.

A spectrum $E$ is called connective, if its negative homotopy groups vanish. In case $E$ is an $\Omega$-spectrum this is equivalnet to $E_n$ being $(n-1)$-connected for all $n \in \N$. Note that a map $f: E \to F$ between connective $\Omega$-spectra is a stable equivalence if and only if $f_0:E_0 \to F_0$ is a weak homotopy equivalence. We denote by $\catname{Spt}_{\geq 0}$ the full subcategory of connective spectra, it is a reflective subcategory and we denote the left adjoint of the inclusion by
\[
(-)_{\geq 0}: \catname{Spt} \to \catname{Spt}_{\geq 0}.
\]

We will need two operations on spectra. The first one is the shift functor
\[
(-)[1]: \catname{Spt} \to \catname{Spt}
\]
defined on a spectrum $E$ by setting $E[1]_n = E_{n+1}$ and obvious structure maps. The second operation is the loop functor
\[
\Omega: \catname{Spt} \to \catname{Spt}
\]
defined by $(\Omega E)_n = \Omega (E_n)$ and looping the structure maps.

We recall Segal's infinite loop space machine \cite{segal}, which provides many examples of connective spectra. We denote by $\Gamma^{op}$ the skeleton of the category of finite pointed sets and pointed maps, i.e. its objects are the sets $n_+:= \{\ast,1,\ldots,n\}$. A $\Gamma$-space is a functor
\[
\Gamma^{op} \to \catname S_\ast
\]
and we denote by $\Gamma \catname S_\ast$ the category of $\Gamma$-spaces and natural transformations.

There are distinguished maps $\rho_i: m_+ \to 1_+$ defined by $\rho_i(k)=\ast$ if $k \neq i$ and $\rho_i (i)=1$. Let $A \in \Gamma \catname S_\ast$, the Segal map is the map
\[
A(m_+)\xrightarrow{\prod_{i=1}^m \rho_i} \prod_m A(1_+).
\]
A $\Gamma$-space is called special if the Segal map is a weak homotopy equivalence. If $A \in \Gamma \catname S_\ast$ is special, the set $\pi_0(A(1_+))$ is a monoid with multiplication induced by the span
\[
A(1_+) \leftarrow A(2_+) \xrightarrow{\simeq} A(1_+)\times A(1_+)
\]
where the left map is the map  sending $i \mapsto 1$ for $i=1,2$ and the right map is the Segal map. A special $\Gamma$-space is called very special if this monoid is actually a group.

In \cite{bousfield} Bousfield and Friedlander construct a model structure on $\Gamma \catname S_\ast$ with fibrant objects the very special $\Gamma$-spaces and weak equivalences between fibrant objects levelwise weak equivalences.

There is a functor $\mathbf B: \Gamma \catname S_\ast \to \catname{Spt}$ defined as follows. Denote by $\mathbb S: \Gamma^{op} \to \catname S_\ast$ the inclusion of finite pointed sets into pointed spaces. Given $A\in \Gamma \catname S_\ast$ we have a (enriched) left Kan extension along $\mathbb S$
\[
\begin{tikzcd}
	\Gamma^{op} \rar{A} \dar[swap]{\mathbb S} & \catname S_\ast \\
	\catname S_\ast \urar[dashed] &
\end{tikzcd}
\]
and we denote this left Kan extension by $L_\mathbb S A.$ Now define $\mathbf BA_n:= L_\mathbb S A(S^n)$. The structure maps are given by the image of the identity morphism $S^1 \wedge S^n \to S^1 \wedge S^n$ under the composite map
\begin{align*}
	\catname S_\ast(S^1 \wedge S^n,S^1 \wedge S^n) & \cong S_\ast(S^1, \catname S_\ast (S^n,S^{n+1}))\\
	&\to S_\ast (S^1, \catname S_\ast(L_\mathbb SA(S^n), L_\mathbb S(A(S^{n+1}))))\\
	&\cong \catname S_\ast(S^1 \wedge L_\mathbb SA(S^n),L_\mathbb SA(S^{n+1})).
\end{align*}
By the Barratt-Priddy-Quillen theorem $L_\mathbb S \mathbb S$ is the sphere spectrum, hence the notation.

The functor $\mathbf B$ has a right adjoint $\mathbf A: \catname{Spt}\to \Gamma \catname S_\ast$ given by sending a spectrum $E \in \catname{Spt}$ to the $\Gamma$-space
\[
n_+ \mapsto \catname{Spt}(\mathbb S^{\times n},E)
\]
using the topological enrichment of spectra. Moreover, the adjoint pair $\mathbf B \dashv \mathbf A$ is a Quillen pair which induces an equivalence of categories
\[
\mathrm{Ho}(\Gamma \catname S_\ast) \simeq \mathrm{Ho}(\catname{Spt}_{\geq 0}).
\]
In view of this equivalence we will say that a $\Gamma$-space $A$ is a model for a connective spectrum $E$, if there is a stable equivalence $\mathbb L \mathbf B A \simeq E$, where $\mathbb L\mathbf B$ is the left derived functor.

Finally, the main theorem of Segal \cite{segal} states that $\mathbf B$ sends cofibrant-fibrant $\Gamma$-spaces to connective $\Omega$-spectra.

\section{Recollection on spaces of manifolds} 
\label{sec:spacesofmanifolds}
We recall the spaces of embedded manifolds with tangential structure from \cite{galatiusrandal}. Denote by $Gr_d(\R^n)$ the Grassmannian manifold of $d$-dimensional planes in $\R^n$ and denote $BO(d):= \colim_{n\in \N} Gr_d(\R^n)$ induced by the standard inclusion $\R^n \to \R^{n+1}$. Let $\theta: X \to BO(d)$ be a Serre fibration and let $M \subset \R^n$ be a $d$-dimensional embedded smooth manifold. Then a tangential $\theta$-structure on $M$ is a lift
\[
\begin{tikzcd}
{} & X \dar{\theta}\\
M \urar[dashed] \rar[swap]{\tau_M} & BO(d),
\end{tikzcd}
\]
where $\tau_M$ is the classifying map of the tangent bundle (determined by the embedding). The topological space $\Psi_\theta (\R^n)$ has as underlying set pairs $(M,l)$, where $M$ is a $d$-dimensional smooth manifold without boundary which is closed as a subset of $\R^n$ and $l:M \to X$ is a $\theta$-structure. We refer to \cite{galatiusrandal} and \cite{galatius} for a description of the topology. We will also in general suppress the tangential structure from the notation.

For $0 \leq k \leq n$, we have the subspaces $\psi_\theta (n,k) \subset \Psi_\theta(\R^n)$ of those manifolds $M \subset \R^n$, satisfying
\[
M \subset \R^k \times (-1,1)^{n-k}.
\]
That is, manifolds with $k$ possibly non-compact and $(n-k)$ compact directions. We denote by
\begin{align*}
\Psi_\theta (\R^\infty)&:= \colim_{n\in \N} \Psi_\theta (\R^n)\\
\psi_\theta(\infty,k)&:= \colim_{n\in \N}\psi_\theta(n,k)
\end{align*}
again induced by the standard inclusions. In \cite{bokstedt} it is shown that the spaces $\Psi_\theta(\R^n)$ are metrizable and hence in particular compactly generated weak Hausdorff spaces.

For all $n \in \N$ and $1 \leq k \leq n-1$ we have a map 
\begin{align*}
\R\times \psi_\theta(n,k) &\to \psi_\theta(n,k+1)\\
(t,M) & \mapsto M-t \cdot e_{k+1}.
\end{align*}
This descends to a map $S^1 \wedge \psi_\theta(n,k) \to \psi_\theta(n,k+1)$ when taking as basepoint the empty manifold.

\begin{Theorem}\label{adj}
The adjoint map
\[
\psi_\theta(n,k) \to \Omega \psi_\theta(n,k+1)
\]
is a weak homotopy equivalence.
\end{Theorem}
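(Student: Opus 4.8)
The map in question is adjoint to the translation map $S^1 \wedge \psi_\theta(n,k) \to \psi_\theta(n,k+1)$, $(t,M) \mapsto M - t\cdot e_{k+1}$, and the plan is to show it is a weak equivalence by a scanning argument in the coordinate $x_{k+1}$: one realises $\psi_\theta(n,k+1)$ as a ``classifying space'' obtained by cutting manifolds along level sets $\{x_{k+1} = \mathrm{const}\}$, in which a single slab between two consecutive levels recovers $\psi_\theta(n,k)$.

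Concretely, I would introduce the auxiliary semi-simplicial space $Z_\bullet$ with $Z_p$ the space of a manifold $W \in \psi_\theta(n,k+1)$ together with reals $t_0 < \cdots < t_p$ near each of which $W$ is cylindrical in the $x_{k+1}$-direction, the $i$-th face map deleting $t_i$. The first and main step is to show that the augmentation $|Z_\bullet| \to \psi_\theta(n,k+1)$, which forgets all the $t_i$, is a weak equivalence: this is a parametrised transversality statement — any compact family of such manifolds can be deformed so as to admit a common system of cylindrical levels, and for a fixed family the space of admissible systems of levels is contractible — and it is proved by the ``nerve of the space of choices is contractible'' argument already used in the proof of Theorem \ref{thm1}. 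The second step is bookkeeping: cutting $W$ along the $t_i$ and normalising each slab identifies $Z_\bullet$, up to levelwise weak equivalence, with a two-sided bar construction built from $\psi_\theta(n,k)$ with the operation of stacking manifolds in the $x_{k+1}$-direction, the two end terms being spaces of manifolds confined to a half-space in the $x_{k+1}$-direction (which are contractible, by pushing to infinity); here the cylindrical condition near the levels is exactly what makes restriction to a slab or half-space a continuous map again landing in a space of closed submanifolds. Finally one observes that the stacking operation is group-like — the empty manifold is a unit, and a manifold together with its reversal in the $x_{k+1}$-direction cancels after being slid off to infinity along one of the $k \geq 1$ noncompact coordinates — so the resulting classifying space $|Z_\bullet|$ deloops $\psi_\theta(n,k)$, giving $\Omega \psi_\theta(n,k+1) \simeq \Omega|Z_\bullet| \simeq \psi_\theta(n,k)$; a last check identifies this composite with the adjoint of the translation map.

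The genuine obstacle is the transversality step, together with the point-set care that pervades the whole argument: since the $\psi_\theta(n,k)$ are spaces of submanifolds that are \emph{closed subsets of $\R^n$}, one must verify that cutting, restricting and regluing along cylindrical levels are continuous operations with the correct targets, and the parametrised step of making a family of manifolds cylindrical near prescribed levels is the non-formal input. Everything after that — identifying the bar structure, checking group-likeness (this is where the hypothesis $1 \leq k$ is used), and comparing with the translation map — is formal. The same argument can be packaged sheaf-theoretically, as in \cite{galatiusrandal}: the assignment taking an open $U \subseteq \R$ to the space of germs near $\R^k \times \bar U \times (-1,1)^{n-k-1}$ of tangentially structured manifolds is a homotopy sheaf on $\R$, contractible on half-lines, equal to $\psi_\theta(n,k)$ on a bounded interval and to $\psi_\theta(n,k+1)$ on all of $\R$, and its Mayer–Vietoris/descent property forces $\psi_\theta(n,k+1)$ to be a delooping of $\psi_\theta(n,k)$, with the comparison map being the translation map above.
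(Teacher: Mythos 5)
The paper does not actually prove this statement --- it is quoted verbatim from Galatius--Randal-Williams \cite[Theorem 3.20]{galatiusrandal} --- and your sketch is essentially a faithful outline of their argument: resolve $\psi_\theta(n,k+1)$ by manifolds with chosen cut levels in the $x_{k+1}$-coordinate (equivalently, descent for the associated homotopy sheaf on $\R$), observe that the half-space terms are contractible by pushing to infinity, and use one of the $k\geq 1$ noncompact directions to see that the stacking operation on $\psi_\theta(n,k)$ is group-like (indeed $\psi_\theta(n,k)$ is already path-connected for $k\geq 1$, which slightly simplifies your cancellation argument). You also correctly isolate the genuine inputs --- the parametrised transversality/contractibility-of-choices step for the augmentation and the point-set continuity of cutting and regluing in the Galatius topology --- so the proposal is sound and matches the cited proof.
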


\begin{proof}
\cite[Theorem 3.20]{galatiusrandal}.
\end{proof}

\begin{Definition}
Let $\psi_\theta$ be the spectrum with $n$-th space given by
\[
(\psi_\theta)_n := \psi_\theta (\infty,n+1)
\]
and structure maps given by the adjoint of the translation.
\end{Definition}

By the above theorem, the spectrum $\psi_\theta$ is an $\Omega$-spectrum.


\section{The weak homotopy type of $\psi_\theta(\infty,1)$} 
\label{sec:the_weak_homotopy_type_of_psi__theta_infty_1}
This section contains a brief review of the main theorem of \cite{gmtw} as proven in \cite{galatiusrandal}. Recall first the construction of the Madsen-Tillmann spectrum $MT \theta(d)$ associated to a Serre fibration $\theta: X \to BO(d)$. Denote by $X(\R^n)$ the pullback
\[
\begin{tikzcd}
X(\R^n)\rar \dar[swap]{\theta_n} & X \dar{\theta}\\
Gr_d(\R^n) \rar & BO(d)
\end{tikzcd}
\]
and by $\gamma_{d,n}^\perp$ the orthogonal complement of the tautological bundle over $Gr_d(\R^n)$. Then define the spectrum $T\theta(d)$ to have as $n$-th space the Thom space of the pullback bundle $T \theta(d)_n:= Th(\theta^\ast_n\gamma_{d,n}^\perp)$. The structure maps are given by 
\[
S^1 \wedge Th(\theta^\ast_n \gamma_{d,n}^\perp)\cong Th(\theta^\ast_n \gamma_{d,n}^\perp \oplus \varepsilon) \to Th (\theta^\ast_{n+1}\gamma_{d,n+1}^\perp)
\]
where $\varepsilon$ denotes the trivial bundle. Then define the Madsen-Tillmann spectrum $MT\theta(d)$ to be a fibrant replacement of the spectrum $T\theta(d)$. Since the adjoints of the structure maps of $T\theta(d)$ are inclusions, we can give an explicit construction of $MT \theta(d)$ as
\[
MT \theta(d)_n := \colim_k \Omega^k T\theta(d)_{n+k}.
\]
Hence we have $\Omega^\infty MT \theta (d) = \colim_k \Omega^k T \theta(d)_k$.

The passage from $MT \theta(d)$ to our spaces of manifolds is as follows. We have a map
\[
Th(\theta^\ast_n \gamma_{d,n}^\perp)\to \Psi_\theta(\R^n)
\]
given by sending an element $(V,u,x)$, where $V \in Gr_d(\R^n)$, $u \in V^\perp$ and $x \in X$, to the translated plane $V-u \in \Psi_\theta (\R^n)$ with constant $\theta$-structure at $x$ and sending the basepoint to the empty manifold.

\begin{Theorem}\label{maingalran2}
The map $Th(\theta^\ast_n \gamma_{d,n}^\perp)\to \Psi_\theta(\R^n)$ is a weak homotopy equivalence.
\end{Theorem}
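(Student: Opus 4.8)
My plan is to reformulate the statement as a claim about a subspace inclusion and then prove that inclusion is a weak equivalence by a scanning argument.

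First I would identify the source of the map. Unwinding the definitions, the Thom space $Th(\theta^\ast_n\gamma_{d,n}^\perp)$ is homeomorphic to the subspace $A_n\subseteq\Psi_\theta(\R^n)$ consisting of the empty manifold together with all nonempty affine $d$-planes in $\R^n$, each equipped with a $\theta$-structure constant along the plane. Indeed, a non-basepoint point of the Thom space is a triple $(V,u,x)$ with $V\in Gr_d(\R^n)$, $u\in V^\perp$ and $x\in X(\R^n)$ over $V$, and the map of the statement sends it to the affine plane $V-u$ with the constant structure $x$; as $|u|\to\infty$ this plane leaves every compact subset of $\R^n$, so in the topology of $\Psi_\theta(\R^n)$ it converges to $\emptyset$, matching the collapse to the basepoint in the Thom space. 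Checking from the definition of the topology on $\Psi_\theta(\R^n)$ that this bijection is a homeomorphism onto $A_n$ with the subspace topology, the theorem becomes the assertion that $A_n\hookrightarrow\Psi_\theta(\R^n)$ is a weak homotopy equivalence.

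To prove this I would scan. It suffices to deform, rel boundary, an arbitrary map $(D^k,\partial D^k)\to(\Psi_\theta(\R^n),A_n)$ into $A_n$, equivalently to produce a homotopy inverse of the inclusion. The geometric input is that microscopically near a point $p\in\R^n$ a $d$-manifold $M$ either misses $p$ --- in which case the rescalings $s\cdot(M-p)$ leave every compact set as $s\to\infty$ and converge to $\emptyset$ --- or passes near $p$, in which case $s\cdot(M-p)$ converges, uniformly on compacta, to an affine plane: the tangent plane of $M$, translated by the offset of $M$ from $p$, carrying its constant $\theta$-structure. Scanning means performing this zoom-in at all $p\in\R^n$ simultaneously and coherently; the output records, for each $p$, a plane $V$, its offset $u\in V^\perp$ (which runs to infinity exactly as $M$ recedes from $p$) and a $\theta$-structure, hence an element of $Th(\theta^\ast_n\gamma_{d,n}^\perp)=A_n$, and the construction extends continuously over the empty manifold. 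Making this precise is the content of \cite{galatiusrandal}: one uses that $U\mapsto\Psi_\theta(U)$ is a microflexible sheaf on the open subsets of $\R^n$ and that $\R^n$, being a filtered union of cubes, is large enough that its local model $A_n$ computes the homotopy type. (One could also route the argument through Theorem~\ref{adj}, iterating the scanning equivalence to present both $\Psi_\theta(\R^n)=\psi_\theta(n,n)$ and the Thom-space side as iterated loop spaces of lower-codimension models.)

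The main obstacle is exactly this parametrised step: carrying out the scanning deformation continuously in the family parameter, consistently at parameters where $M_v$ is empty or is escaping to infinity, and so as to preserve the boundary condition. A naive recipe --- zooming in at a single chosen point of each $M_v$ --- is discontinuous near the empty manifold and detects only tangent planes through that point, missing the normal directions of the Thom space; the coherent scanning of \cite{galatiusrandal} is designed to remedy both defects. Granting that step, checking that the map of the statement is compatible with the scanning deformations, and is therefore the asserted weak equivalence, is routine.
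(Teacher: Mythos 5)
The paper offers no argument for this statement beyond the citation of \cite[Theorem 3.22]{galatiusrandal}, so the only meaningful comparison is with the proof in that reference. Your first reduction is faithful to it: identifying $Th(\theta^\ast_n\gamma_{d,n}^\perp)$ with the subspace $A_n\subset\Psi_\theta(\R^n)$ of affine $d$-planes with constant $\theta$-structure (plus $\emptyset$), and isolating the geometric input that zooming in at a point turns $M$ into its tangent plane or into $\emptyset$, is exactly the right starting point.

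The step you call ``scanning'' is, however, the wrong mechanism for this particular statement, and as written it does not land in the right space. Performing the zoom at \emph{every} $p\in\R^n$ simultaneously assigns to $M$ a \emph{function} $\R^n\to Th(\theta^\ast_n\gamma_{d,n}^\perp)$, i.e.\ a point of an iterated loop/section space of the Thom space, not a point of $Th(\theta^\ast_n\gamma_{d,n}^\perp)=A_n$ as you assert; your ``hence an element of $A_n$'' is a non sequitur. Scanning over all of $\R^n$ is the tool that identifies the spaces $\psi_\theta(n,k)$ with few non-compact directions with iterated loop spaces of the Thom space --- that is the composite statement of Theorem \ref{maingalran}, which in \cite{galatiusrandal} is \emph{deduced from} Theorem \ref{maingalran2} together with Theorem \ref{adj}, so invoking it here is circular in spirit. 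For $\Psi_\theta(\R^n)=\psi_\theta(n,n)$, where manifolds may be non-compact in all $n$ directions, the correct argument zooms in at a \emph{single} point, the origin: given a compact family, one first pushes the part of each manifold lying outside a small ball around $0$ radially off to infinity (this uses the non-compactness in every direction, and is what makes the deformation continuous also at parameters where the manifold misses the origin and must converge to $\emptyset$), and then rescales so that the remaining piece straightens to an affine plane with constant $\theta$-structure. That single-point deformation is the one genuinely nontrivial step, and your proposal both mis-describes it (as global scanning, and alternatively as a microflexibility argument, which is likewise not how \cite{galatiusrandal} prove this theorem) and ultimately outsources it to the reference; so the proposal does not stand on its own as a proof.
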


\begin{proof}
\cite[Theorem 3.22]{galatiusrandal}.
\end{proof}

On the other hand, by theorem \ref{adj} we also have a weak homotopy equivalence
\[
\psi_\theta(n,1) \to \Omega^{n-1} \Psi_\theta(\R^n).
\]
Combining the two equivalences, we obtain
\[
\Omega^{n-1} Th (\theta^\ast_n \gamma_{d,n}^\perp) \xrightarrow{\simeq} \Omega^{n-1} \Psi_\theta(\R^n) \xleftarrow{\simeq} \psi_\theta(n,1).
\]
Now we have a map
\begin{align*}
S^1 \wedge \Psi_\theta(\R^n) &\to \Psi_\theta(\R^{n+1})\\
(t,M)&\mapsto M \times \{t\},
\end{align*}
and we obtain the commutative diagram
\[
\begin{tikzcd}
\Omega^{n-1} Th (\theta^\ast_n \gamma_{d,n}^\perp)\rar{\simeq}\dar \rar{\simeq}& \Omega^{n-1} \Psi_\theta (\R^n) \dar &  \psi_\theta(n,1)\dar \lar[swap]{\simeq}\\ 
\Omega^{n} Th (\theta^\ast_{n+1} \gamma_{d,n+1}^\perp)\rar{\simeq} & \Omega^n \Psi_\theta(\R^{n+1}) & \psi_\theta(n+1,1) \lar[swap]{\simeq}. 
\end{tikzcd}
\]
Finally, letting $n\to \infty$ we can determine the weak homotopy type of $\psi_\theta(\infty,1)$.

\begin{Theorem}\label{maingalran}
There is a weak homotopy equivalence of spaces
\[
\Omega^\infty MT\theta(d)[1] \xrightarrow{\simeq} \colim_{n\in\N}\Omega^{n-1} \Psi_\theta (\R^n) \xleftarrow{\simeq} \psi_\theta(\infty,1).
\]
\end{Theorem}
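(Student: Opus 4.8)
The plan is to obtain the stated zig-zag by passing to the colimit over $n\in\N$ in the commutative diagram displayed just above, and then identifying the three resulting colimits. Taking $\colim_n$ of the middle column gives $\colim_n \Omega^{n-1}\Psi_\theta(\R^n)$, which is the space in the middle of the statement. Taking $\colim_n$ of the right-hand column gives $\colim_n \psi_\theta(n,1) = \psi_\theta(\infty,1)$, by the very definition of $\psi_\theta(\infty,1)$. For the left-hand column, recall that $T\theta(d)_n = Th(\theta^\ast_n\gamma_{d,n}^\perp)$ and $MT\theta(d)_n = \colim_k \Omega^k T\theta(d)_{n+k}$; since $MT\theta(d)$ is an $\Omega$-spectrum, so is its shift, hence
\[
\Omega^\infty MT\theta(d)[1] \;\simeq\; \bigl(MT\theta(d)[1]\bigr)_0 \;=\; MT\theta(d)_1 \;=\; \colim_k \Omega^k T\theta(d)_{k+1} \;=\; \colim_n \Omega^{n-1} Th(\theta^\ast_n\gamma_{d,n}^\perp)
\]
after reindexing $n = k+1$. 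Thus the left-hand column has colimit $\Omega^\infty MT\theta(d)[1]$, and the colimits of the two horizontal maps give the two arrows of the statement, both pointing into $\colim_n\Omega^{n-1}\Psi_\theta(\R^n)$.

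It remains to see that the colimit of a horizontal map, which is a levelwise weak homotopy equivalence of towers, is again a weak homotopy equivalence. For this I would invoke the standard fact that for a tower of closed inclusions in $\catname S$ the colimit agrees with the homotopy colimit, so that a levelwise weak equivalence between two such towers induces a weak equivalence on colimits; it therefore suffices to check that the three vertical maps of the diagram are closed inclusions. For the right column this is the standard closed inclusion induced by $\R^n\hookrightarrow\R^{n+1}$. For the left column the adjoints of the structure maps of $T\theta(d)$ are closed inclusions (as recorded in this section), and $\Omega = \catname S_\ast(S^1,-)$ preserves closed inclusions since $S^1$ is compact. For the middle column one uses that $M\mapsto M\times\{0\}$ defines a closed inclusion $\Psi_\theta(\R^n)\hookrightarrow\Psi_\theta(\R^{n+1})$, which is a feature of the topology on the spaces of manifolds from \cite{galatiusrandal, bokstedt}, and again $\Omega$ preserves closed inclusions.

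The commutativity of the colimit diagram is automatic from its commutativity at each finite stage, which has already been exhibited. I expect the only real subtlety to be the bookkeeping around the left-hand colimit: one must keep track of the colimit defining $MT\theta(d)$, the shift, and the reindexing, and in particular verify (via a cofinality argument for the diagonal colimit, if one works point-set rather than with the $\Omega$-spectrum model) that $\colim_n\Omega^{n-1}T\theta(d)_n$ really is $\Omega^\infty MT\theta(d)[1]$. There is no geometric obstacle here, since the substantive content is entirely contained in Theorems~\ref{adj} and \ref{maingalran2}.
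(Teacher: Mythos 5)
Your argument is correct and is essentially the paper's: the paper offers no proof beyond the preceding commutative ladder and ``letting $n\to\infty$'', which is exactly your passage to colimits, combined with the identification $\colim_n \Omega^{n-1}T\theta(d)_n = MT\theta(d)[1]_0 = \Omega^\infty MT\theta(d)[1]$ that the paper sets up when defining $MT\theta(d)$ as $\colim_k\Omega^k T\theta(d)_{n+k}$. The point-set justifications you supply (colimits of towers of closed inclusions compute homotopy colimits, so levelwise equivalences pass to the colimit) go beyond what the paper records and are the standard facts implicitly relied on in \cite{galatiusrandal}.
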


\section{$\Gamma$-space models for $MT\theta(d)$ and $\psi_\theta$} 
\label{sec:gamma_space_models}

In this section we construct $\Gamma$-space models for the spectra $MT\theta(d)$ and $\psi_\theta$. The comparison of these $\Gamma$-spaces to the respective spectra relies heavily on results of May and Thomason \cite{maythomason}. 

\begin{Lemma}\label{switch}
	Let $A\in \Gamma \catname S_\ast$ then there is a natural map of spectra
	\[
	\mathbf B\Omega A \to \Omega \mathbf BA
\]
which is the identity on 0-th spaces.
\end{Lemma}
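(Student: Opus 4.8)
The plan is to produce the map from the universal property of the enriched left Kan extension defining $\mathbf{B}$, rather than from explicit point-set formulas; naturality in $A$ and the assertion about $0$-th spaces then come for free.

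The first step is to record the elementary but load-bearing observation that $\mathbf{B}$ factors through the construction sending an $\catname{S}_\ast$-enriched endofunctor $F$ of $\catname{S}_\ast$ to the spectrum $\overline{F}$ with $n$-th space $F(S^n)$ and structure maps the assembly maps $S^1 \wedge F(S^n) \to F(S^1 \wedge S^n)$. Indeed, the composite displayed in the definition of the structure maps of $\mathbf{B}$ is exactly the assembly map of $L_{\mathbb{S}}A$, so $\mathbf{B}A = \overline{L_{\mathbb{S}}A}$. Moreover $F \mapsto \overline{F}$ is functorial: a natural transformation of enriched functors commutes with assembly maps, since the assembly map is built from the enrichment together with the coevaluation $S^1 \to \catname{S}_\ast(S^n, S^1 \wedge S^n)$. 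And $\overline{\Omega \circ F} = \Omega\, \overline{F}$, because the assembly map of a composite $\Omega \circ F$ is the evident composite $S^1 \wedge \Omega F(S^n) \to \Omega(S^1 \wedge F(S^n)) \to \Omega F(S^1 \wedge S^n)$, which is precisely what looping the structure maps of $\overline{F}$ yields. In particular $\Omega \mathbf{B}A = \overline{\Omega \circ L_{\mathbb{S}}A}$.

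The second step is to construct a natural transformation of enriched endofunctors $\Phi \colon L_{\mathbb{S}}(\Omega A) \Rightarrow \Omega \circ L_{\mathbb{S}}A$. Since $\mathbb{S} \colon \Gamma^{op} \to \catname{S}_\ast$ is fully faithful, the unit of the Kan extension is a natural isomorphism $A \xrightarrow{\,\cong\,} L_{\mathbb{S}}A \circ \mathbb{S}$, and applying $\Omega$ gives a natural isomorphism $\Omega A \xrightarrow{\,\cong\,} (\Omega \circ L_{\mathbb{S}}A) \circ \mathbb{S}$; by the universal property of the left Kan extension $L_{\mathbb{S}}(\Omega A)$ of the $\Gamma$-space $\Omega A$, this corresponds to a unique such $\Phi$. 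Applying $\overline{(-)}$ to $\Phi$ yields the asserted map of spectra $\mathbf{B}\Omega A = \overline{L_{\mathbb{S}}(\Omega A)} \to \overline{\Omega \circ L_{\mathbb{S}}A} = \Omega \mathbf{B}A$, which is manifestly natural in $A$. For the $0$-th spaces, note that $S^0 = \mathbb{S}(1_+)$, so $\Phi_{S^0}$ is the $1_+$-component of the whiskering $\Phi \cdot \mathbb{S}$, which by construction is the chosen isomorphism displayed above; under the canonical identifications $\mathbf{B}\Omega A_0 \cong \Omega A(1_+) \cong (\Omega \mathbf{B}A)_0$ supplied by the units, $\Phi_{S^0}$ becomes the identity, which is the last claim.

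The only genuine work is in the first step: it is routine but must be carried out with care to confirm that the structure maps in the definition of $\mathbf{B}$ are indeed the assembly maps of $L_{\mathbb{S}}(-)$, that assembly maps are natural with respect to natural transformations of enriched functors, and that they compose correctly under $\Omega$. Once this bookkeeping about enriched functors and their assembly maps is in place, the construction of the map and the verification of both claimed properties are formal consequences of the universal property of $L_{\mathbb{S}}$.
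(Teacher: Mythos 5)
Your proposal is correct and follows essentially the same route as the paper: both obtain the natural transformation $L_{\mathbb S}(\Omega A) \Rightarrow \Omega \circ L_{\mathbb S}A$ from the universal property of the enriched left Kan extension together with full faithfulness of $\mathbb S$, then evaluate at spheres and check compatibility with the structure maps by naturality, with $S^0 = \mathbb S(1_+)$ giving the identity on $0$-th spaces. Your explicit bookkeeping via the assembly-map construction $F \mapsto \overline F$ just makes precise what the paper's proof leaves as "by naturality we have a commutative diagram."
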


\begin{proof}
Since $\mathbb S : \Gamma^{op} \to \catname S_\ast$ is fully faithful, we have a strictly commutative diagram of functors
\[
\begin{tikzcd}
	\Gamma^{op}\rar{\Omega A}\dar[swap]{\mathbb S} & \catname S_\ast\\
	\catname S_\ast \urar[swap]{L_\mathbb S \Omega A}. & 
\end{tikzcd}
\]
Now the composition of the loop functor with the left Kan extension $\Omega L_\mathbb S A$ also gives a strictly commutative diagram
\[
\begin{tikzcd}
	\Gamma^{op}\rar{\Omega A}\dar[swap]{\mathbb S} & \catname S_\ast\\
	\catname S_\ast \urar[swap]{\Omega L_\mathbb S A}. & 
\end{tikzcd}
\]
Hence by the universal property of the left Kan extension we get a natural transformation $\gamma: L_\mathbb S \Omega A\Rightarrow \Omega L_\mathbb S A $. Now the components at the spheres assemble into a map of spectra $\mathbf B\Omega A \to \Omega \mathbf BA$, since by naturality we have a commutative diagram
\[
\begin{tikzcd}
	S^1\wedge L_\mathbb S\Omega A(S^n) \rar \dar[swap]{id \wedge \gamma} & L_\mathbb S\Omega A(S^{n+1})\dar{\gamma}\\
	S^1\wedge \Omega L_\mathbb SA(S^n) \rar & \Omega L_\mathbb SA(S^{n+1}).
\end{tikzcd}
\]
Finally, since $S^0 = 1_+\in \Gamma^{op}$ the map of spectra is the identity on 0-th spaces.
\end{proof}

In general for any $A\in \Gamma \catname S_\ast$ the spectrum $\mathbf B A$ might not have the right stable homotopy type as the functor $\mathbf B$ only preserves weak equivalences between cofibrant objects. However for very special $\Gamma$-spaces, there is a more convenient replacement, which gives the right homotopy type. We record the following fact from May-Thomason \cite{maythomason}, which generalizes a construction of Segal in \cite{segal}.

\begin{Lemma}\label{cofibrant}
There is a functor $W: \Gamma \catname S_\ast \to \Gamma \catname S_\ast$ such that the following holds for all very special $X \in \Gamma \catname S_\ast$.
\begin{itemize}
	\item The spectrum $\mathbf BWX$ is a connective $\Omega$-spectrum.
	\item The $\Gamma$-space $WX$ is very special and there is a weak equivalence $WX \to X$.
	\item If $X,Y$ are very special and there is a weak equivalence $X \simeq Y$ then $\mathbf BWX \simeq \mathbf BWY$.
	\item There is a weak equivalence $W\Omega X \to \Omega W X$.
\end{itemize}
\end{Lemma}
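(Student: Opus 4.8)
The plan is to take $W$ to be the functor constructed by May and Thomason in \cite{maythomason} (a refinement of Segal's construction in \cite{segal}) and to verify the four listed properties, the bulk of which are already established in \emph{loc. cit.}; I will indicate how to obtain the remaining ones. Concretely, $W$ is a functorial ``fattening'' that replaces a $\Gamma$-space $X$ by a cofibrant $\Gamma$-space $WX$ equipped with a natural map $WX \to X$, built from a simplicial bar resolution in such a way that, when $X$ is special, this map is a \emph{levelwise} weak equivalence. The restriction to (very) special inputs in the statement is exactly what forces the strict homotopy colimits computing $WX$ to agree, levelwise, with the values of $X$.

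Granting this, the second bullet is the defining property of $W$ together with the observation that being very special is invariant under levelwise weak equivalence: the Segal maps of $WX$ and of $X$ sit in a commutative square with vertical weak equivalences, and $\pi_0 WX(1_+) \to \pi_0 X(1_+)$ is an isomorphism of monoids (by naturality of the span defining the multiplication), so $WX$ is very special because $X$ is. The third bullet is then formal: $W$ sends a zig-zag of weak equivalences $X \simeq Y$ between very special $\Gamma$-spaces to a zig-zag $WX \simeq WY$ of very special \emph{cofibrant} $\Gamma$-spaces, and since $\mathbf B \dashv \mathbf A$ is a Quillen pair, $\mathbf B$ preserves weak equivalences between cofibrant objects (Ken Brown's lemma); hence $\mathbf B WX \simeq \mathbf B WY$.

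The first bullet is the substantial point. Since $WX$ is cofibrant by construction and very special, hence fibrant in the Bousfield--Friedlander model structure, it is cofibrant--fibrant, so the theorem of Segal recalled at the end of Section \ref{sec:spectra} applies verbatim: $\mathbf B WX$ is a connective $\Omega$-spectrum. Unpacked, the content is that the adjoint structure maps $\mathbf B WX_n \to \Omega\, \mathbf B WX_{n+1}$ are weak homotopy equivalences, which for $n \geq 1$ is the Segal argument using only that $WX$ is special, while for $n = 0$ it is precisely the group-completion statement, where the \emph{very} special hypothesis is indispensable. For the fourth bullet, the construction $W$ commutes with the levelwise loop functor $\Omega$ on $\Gamma$-spaces up to a canonical comparison map $W\Omega X \to \Omega WX$, obtained from the universal property of the left Kan prolongation exactly as in the proof of Lemma \ref{switch}; it is a levelwise weak equivalence because $\Omega \colon \catname S_\ast \to \catname S_\ast$ preserves the weak equivalences involved (every space in sight being fibrant) and commutes, up to equivalence, with the homotopy colimits appearing in the bar resolution defining $W$.

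The step I expect to be the main obstacle is the first bullet, namely verifying that $\mathbf B WX$ is genuinely an $\Omega$-spectrum: this is the heart of the Segal / May--Thomason machinery and requires care that the bar resolution defining $W$ is homotopically correct, so that the strict functor $\mathbf B$ computes the homotopy-invariant one, and in particular the group completion at level $0$, which is where the very special hypothesis cannot be dropped. Everything else reduces to functoriality together with the standard homotopical behaviour of left Quillen functors and of the loop functor.
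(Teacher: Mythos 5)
Your proposal is correct and follows essentially the same route as the paper, which simply cites Appendix B of May--Thomason for this lemma; you take $W$ to be exactly that construction and your sketch of why the four properties hold (cofibrancy plus the very special hypothesis feeding into Segal's theorem for the first bullet, Ken Brown's lemma for the third, and the canonical comparison with $\Omega$ for the fourth) matches the content of the cited argument.
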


\begin{proof}
	See \cite[Appendix B]{maythomason}.
\end{proof}

The important thing for us will be that if $X\in \Gamma \catname S_\ast$ is very special, then $\mathbf BWX$ has the right stable homotopy type. 

\begin{Lemma}\label{upandacross}
	Let $E^i$, $i \in \N$ be a sequence of connective $\Omega$-spectra together with stable equivalences $f^i: E^i \to \Omega E^{i+1}$. Let $E_0$ be the spectrum with $\left(E_0\right)_n := E^n_0$ and structure maps given by $f^n_0: E^n_0 \to \Omega E^{n+1}_0$. Then there is a natural stable equivalence $E^0 \simeq E_0$.
\end{Lemma}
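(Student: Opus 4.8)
The plan is to exhibit $E^0$ and $E_0$ as stably equivalent by producing an explicit zig-zag of maps of spectra, using the iterated equivalences $f^i$ to ``unfold'' the telescope defining $E_0$. First I would note that each $E^i$ is a connective $\Omega$-spectrum, so $\Omega^k E^{i+k}$ is again a connective $\Omega$-spectrum (the loop functor on spectra preserves $\Omega$-spectra since it is levelwise $\Omega$ and commutes with the structure maps up to the usual swap), and iterating the given stable equivalences yields stable equivalences
\[
E^i \xrightarrow{f^i} \Omega E^{i+1} \xrightarrow{\Omega f^{i+1}} \Omega^2 E^{i+2} \xrightarrow{\;\;\;} \cdots \xrightarrow{\;\;\;} \Omega^k E^{i+k}.
\]
Because a stable equivalence between $\Omega$-spectra is a levelwise weak homotopy equivalence, each of these is in particular a levelwise equivalence; in level $0$ this gives the compatible system of equivalences $E^i_0 \xrightarrow{\simeq} \Omega^k E^{i+k}_0$ whose $k=1$ instance is the structure map $f^i_0$ of $E_0$.

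Next I would build the comparison map. The idea is that the $n$-th space $E^0_n$ of the source spectrum maps to $E_0$ by first applying $f^0$ composed $n$ times to reach $\Omega^n E^n$ and then taking the $n$-th space, which lands near $(E_0)_n = E^n_0$ via the adjoints of the structure maps of the $\Omega$-spectrum $E^n$. Concretely, the $n$-th structure-map adjoint of $E^n$ gives a weak equivalence $E^n_0 \xrightarrow{\simeq} \Omega^n E^n_n$, and the composite $E^0 \xrightarrow{\simeq} \Omega^n E^n$ gives a weak equivalence $E^0_n \xrightarrow{\simeq} \Omega^n E^n_n = (\Omega^n E^n)_0 \cdots$ — more carefully, one compares $E^0_n$ with $(\Omega^n E^n)_n = \Omega^n E^n_{2n}$ on one side and $E^n_0$ with $\Omega^n E^n_n$ on the other, both via $\Omega$-spectrum structure maps, and checks these are compatible with the structure maps as $n$ varies. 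I would assemble these into a map of spectra $E^0 \to E_0$ (or a short zig-zag, if strict commutativity of the requisite squares fails and one needs to pass through an intermediate spectrum whose $n$-th space is $\Omega^n E^n_{2n}$), and verify the squares relating level $n$ to level $n+1$ commute on the nose by naturality of $f^i$ and of the adjunction $\Sigma^\infty \dashv \Omega^\infty$ applied levelwise.

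Finally I would check that the resulting map is a stable equivalence. Since both $E^0$ and $E_0$ are connective $\Omega$-spectra — $E^0$ by hypothesis, and $E_0$ because its structure-map adjoints are the $f^n_0$, each a weak homotopy equivalence as the level-$0$ part of a stable equivalence between $\Omega$-spectra — it suffices, by the fact recorded in Section \ref{sec:spectra}, to check that the map is a weak homotopy equivalence on $0$-th spaces. But in level $0$ the comparison map is (the identity, or) the identity on $E^0_0 = E^0_0$, composed with no loopings, so this is immediate. The main obstacle I anticipate is bookkeeping: making the squares that define the map of spectra strictly commute, since the $\Omega$-spectrum structure maps of $E^n$ and the equivalences $f^i$ interact only up to coherent homotopy in general, so one may be forced either to replace the $E^i$ functorially by strict $\Omega$-spectra first (harmless, as we work in the homotopy category) or to present the equivalence $E^0 \simeq E_0$ as an explicit finite zig-zag through a ``diagonal'' telescope spectrum rather than a single map. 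Everything else is a routine diagram chase once the right intermediate object is chosen.
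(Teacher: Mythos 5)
Your outline is essentially the standard proof of the up-and-across lemma, but be aware that the paper itself offers no argument here: it simply cites \cite{maythomason} and \cite{fiedorowicz}, so you are reconstructing the proof of the reference rather than matching anything in the text. Your reconstruction has the right architecture: form the ``diagonal'' spectrum $D$ with $D_n = \Omega^n(E^n_n)$ and exhibit a zig-zag $E^0 \to D \leftarrow E_0$ in which the leg out of $E_0$ is built from the iterated adjoint structure maps $E^n_0 \to \Omega^n(E^n_n)$ of the $\Omega$-spectrum $E^n$, and the leg out of $E^0$ is the level-$n$ component of the composite stable equivalence $E^0 \to \Omega^n E^n$; since stable equivalences between $\Omega$-spectra are levelwise weak equivalences, both legs are levelwise equivalences and you are done (your alternative reduction to level $0$, where both legs are identities, also works since everything in sight is a connective $\Omega$-spectrum). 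Two corrections, though. First, your indices for the intermediate object are off: with the paper's definition $(\Omega E)_n = \Omega(E_n)$, one has $(\Omega^n E^n)_n = \Omega^n(E^n_n)$, not $\Omega^n E^n_{2n}$ (you are conflating the loop functor with the shift functor), and likewise $(\Omega^n E^n)_0 = \Omega^n(E^n_0)$, not $\Omega^n E^n_n$; since choosing the correct intermediate spectrum is the crux of the construction, this needs to be fixed rather than waved at. Second, your worry about strict commutativity is unfounded and should be replaced by the actual check: the squares relating level $n$ to level $n+1$ commute on the nose precisely because each $f^i$ is a map of spectra and hence strictly intertwines the structure maps of $E^i$ and $\Omega E^{i+1}$; no fibrant replacement or passage to a homotopy-coherent diagram is needed. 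With those repairs the argument is complete and agrees with the one in the cited sources.
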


\begin{proof}
	This is the 'up-and-across lemma` of \cite{maythomason} and \cite{fiedorowicz}.
\end{proof}

Note that in particular $E_0$ is connective. We will encounter the following situation.

\begin{Definition}
A functor $E: \Gamma^{op} \to \catname{Spt}$ is called a $\Gamma$-spectrum. It is called a special $\Gamma$-spectrum if the Segal map
\[
E(m_+) \to \prod_m E(1_+)
\]
is a stable equivalence. Furthermore, we denote by $\Gamma^{(k)}E$ the $\Gamma$-space given by evaluating at the $k$-th space, that is
\[
\Gamma^{(k)}E(m_+):= E(m_+)_k.
\] 
\end{Definition}

The key lemma for showing that we have constructed the right $\Gamma$-spaces will be the following.

\begin{Lemma}\label{key}
Let $E: \Gamma^{op}\to \catname{Spt}$ be projectively fibrant and special. Then the $\Gamma$-space $\Gamma^{(0)}E$ is a model for the connective cover of $E(1_+)$.
\end{Lemma}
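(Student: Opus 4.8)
The plan is to produce, for each $m_+ \in \Gamma^{op}$, a connective $\Omega$-spectrum whose $n$-th space records $E(m_+)_n$ and which is stably equivalent to the connective cover $E(1_+)_{\geq 0}$ in a way that is natural in $m_+$; then the $n=0$ slice of this family is exactly $\Gamma^{(0)}E$, and its associated spectrum $\mathbf B$-construction will be seen to compute the connective cover. Concretely, the idea is to apply Lemma \ref{upandacross} ``fibrewise over $\Gamma^{op}$''. First I would observe that since $E$ is projectively fibrant, each $E(m_+)$ is an $\Omega$-spectrum, and since $\catname{Spt}$ carries its topological enrichment, the loop functor $\Omega$ and the shift $(-)[1]$ act levelwise; for an $\Omega$-spectrum $F$ the canonical map $F \to \Omega(F[1])$ is a levelwise weak equivalence. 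Thus for each fixed $m_+$ we have stable equivalences $E(m_+) \simeq \Omega E(m_+)[1] \simeq \Omega^2 E(m_+)[2] \simeq \cdots$, all natural in $m_+$. Feeding the sequence $E^i := E(m_+)[i]$ into Lemma \ref{upandacross} (whose hypotheses are met because each $E(m_+)[i]$ is again a connective $\Omega$-spectrum — here I use that $E(1_+)$, hence each $E(m_+)$ by the special condition and the fact that a finite product of connective spectra is connective, has the appropriate connectivity, or else one simply replaces $E$ by its connective cover first) yields a spectrum $(E(m_+))_0$ with $n$-th space $E(m_+)_n$ and structure maps the evident ones, together with a natural stable equivalence $E(m_+) \simeq (E(m_+))_0$.

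Next I would package this across all $m_+$ at once. Applying $\Gamma^{(n)}$ to $E$ gives a $\Gamma$-space $\Gamma^{(n)}E$ with $\Gamma^{(n)}E(m_+) = E(m_+)_n$; since $E$ is special, each $\Gamma^{(n)}E$ is a very special $\Gamma$-space (the Segal map for $\Gamma^{(n)}E$ is the $n$-th space of the Segal map of $E$, a stable equivalence between $\Omega$-spectra, hence a levelwise weak equivalence — and the group-completeness of $\pi_0$ comes from $E(1_+)$ being a spectrum, so $\pi_0 E(1_+)_n$ is already a group). Now apply the functor $W$ of Lemma \ref{cofibrant} to each $\Gamma^{(n)}E$ to get connective $\Omega$-spectra $\mathbf B W \Gamma^{(n)}E$, and use the last clause of Lemma \ref{cofibrant} together with Lemma \ref{switch} to obtain stable equivalences $\mathbf B W \Gamma^{(n)}E \simeq \Omega \mathbf B W \Gamma^{(n+1)}E$ — the point being that $\Gamma^{(n)}E \simeq \Omega \Gamma^{(n+1)}E$ as $\Gamma$-spaces (levelwise, this is the equivalence $E(m_+)_n \simeq \Omega E(m_+)_{n+1}$ coming from $E(m_+)$ being an $\Omega$-spectrum), so $W\Gamma^{(n)}E \simeq W\Omega\Gamma^{(n+1)}E \simeq \Omega W \Gamma^{(n+1)}E$.

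Then a second application of Lemma \ref{upandacross} — this time to the sequence $E^n := \mathbf B W \Gamma^{(n)}E$ of connective $\Omega$-spectra with the structure maps just produced — gives a spectrum whose $n$-th space is $(\mathbf B W \Gamma^{(n)}E)_0 = L_{\mathbb S} W\Gamma^{(n)}E(S^0) \simeq W\Gamma^{(n)}E(1_+) \simeq E(1_+)_n$, and a natural stable equivalence $\mathbf B W \Gamma^{(0)}E \simeq E(1_+)$-with-truncated-negative-part, i.e. $\mathbf B W \Gamma^{(0)}E \simeq E(1_+)_{\geq 0}$. Since $\mathbf B W \Gamma^{(0)}E$ is a model for $\mathbb L \mathbf B \Gamma^{(0)}E$ (that is the content of the first and third bullets of Lemma \ref{cofibrant}), this says precisely that $\Gamma^{(0)}E$ is a model for the connective cover of $E(1_+)$, as claimed.

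The main obstacle I anticipate is bookkeeping the naturality and the two nested appeals to the up-and-across lemma so that the identifications of $0$-th spaces are genuinely compatible — in particular making sure that the ``levelwise $\Omega$-spectrum'' structure of $E(m_+)$ and the $W$-replacements can be chosen functorially in $m_+$, so that $\Gamma^{(n)}E$ and its $W$-replacement fit into an actual sequence of $\Gamma$-spaces rather than just a zig-zag; the clean statement of Lemma \ref{cofibrant} (functoriality of $W$ and the natural equivalence $W\Omega X \to \Omega W X$) is exactly what makes this go through. A secondary subtlety is the connectivity hypothesis needed to apply Lemma \ref{upandacross}: one should either check that $E(1_+)$ is already connective in the situations of interest, or remark that replacing $E$ by $m_+ \mapsto (E(m_+))_{\geq 0}$ changes neither $\Gamma^{(0)}E$'s homotopy type in non-negative degrees nor the conclusion, since the connective cover is what we are comparing to anyway.
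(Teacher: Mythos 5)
Your overall architecture is the same as the paper's: show that each $\Gamma^{(k)}E$ is very special, replace by $W\Gamma^{(k)}E$, apply $\mathbf B$, produce stable equivalences $\mathbf BW\Gamma^{(k)}E_{\geq 0} \to \Omega\,\mathbf BW\Gamma^{(k+1)}E_{\geq 0}$ from Lemmas \ref{cofibrant} and \ref{switch}, and finish with the up-and-across Lemma \ref{upandacross}; passing first to the connective covers $E(m_+)_{\geq 0}$ is also what the paper does. (Your first invocation of Lemma \ref{upandacross}, applied to the sequence $E(m_+)[i]$, is vacuous: the output spectrum has $n$-th space $E(m_+)_n$ with the original structure maps, i.e.\ it is $E(m_+)$ itself, so that paragraph can be deleted.) However, two steps are genuinely under-justified, and they are precisely where the paper's proof does its non-formal work.

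First, very-specialness. You argue that $\pi_0(E(1_+)_n)$ ``is already a group'' because $E(1_+)$ is a spectrum. But the monoid that must be shown to be a group is the one induced by the Segal span of the $\Gamma$-space $\Gamma^{(0)}E$, i.e.\ by the $\Gamma^{op}$-direction; the group structure you invoke (loop concatenation, giving the stable $\pi_0$ of $E(1_+)$) is a priori a \emph{different} binary operation on the same set. To conclude, one must check that the two multiplications satisfy the interchange law and then apply the Eckmann--Hilton argument; the paper does this by forming the bi-$\Gamma$-space $\mathbf A E$ and observing that the relevant square commutes by functoriality. Without some version of this compatibility your inference does not go through. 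Second, Lemma \ref{switch} only supplies a natural map $\mathbf B\Omega A \to \Omega\,\mathbf B A$ that is the identity on $0$-th spaces; it is not a stable equivalence in general, since $\pi_{-1}(\Omega\,\mathbf B A) \cong \pi_0(\mathbf B A)$ need not vanish while $\mathbf B\Omega A$ is connective. You need these maps to be stable equivalences in order to feed the composites into Lemma \ref{upandacross}. The missing argument (which the paper supplies) is that a map of $\Omega$-spectra which is an equivalence on $0$-th spaces is an equivalence on connective covers, and that for $k\geq 1$ the target $\Omega\,\mathbf BW\Gamma^{(k)}E_{\geq 0}$ is itself connective because $E(1_+)_{\geq 0}$ is a connective $\Omega$-spectrum, so that $\pi_0\bigl(\mathbf BW\Gamma^{(k)}E_{\geq 0}\bigr)$ vanishes. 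With these two points filled in, your proof coincides with the paper's.
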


\begin{proof}
We first show that the $\Gamma$-spaces $\Gamma^{(k)}E$ are very special. In fact we will only need this for $k=0$, the argument for higher $k$ is completely analoguous. First note that the $\Gamma$-spaces are special, since $E$ is projectively fibrant and thus the Segal map is a levelwise equivalence. It remains to show that $\pi_0\left(\Gamma^{(0)}E(1_+)\right)$ is a group. To this end we compose with the functor $\mathbf A: \catname{Spt}\to \Gamma \catname S_\ast$ to obtain a functor
\[
\mathbf AE: \Gamma^{op} \to \Gamma \catname S_\ast
\]
or equivalently a functor
\[
\mathbf AE: \Gamma^{op}\times \Gamma^{op} \to \catname S_\ast
\]
which we will not distinguish notationally. Observe that we have
\begin{align*}
	\mathbf AE(1_+)(-) &= \mathbf AE(1_+) \ \ \mathrm{and}\\
	\mathbf AE(-)(1_+) &= \Gamma^{(0)}E.
\end{align*}
Now we have the following diagram, where the middle square commutes by functoriality
\[
\begin{tikzcd}
	{}& & \mathbf AE(1_+)(1_+) \times \mathbf AE(1_+)(1_+)\\
	& \mathbf AE(2_+)(2_+) \rar \dar & \mathbf AE(2_+)(1_+)\uar[swap]{\simeq}\dar \\
	\mathbf AE(1_+)(1_+)\times \mathbf AE(1_+)(1_+) & \mathbf AE(1_+)(2_+) \lar{\simeq} \rar & \mathbf AE(1_+)(1_+).
\end{tikzcd}
\]
The right vertical span represents the monoid structure of $\Gamma^{(0)}E$ and the lower horizontal span represents the monoid structure of $\mathbf AE(1_+)$. The commutativity of the middle square is now precisely the statement that one is a homomorphism for the other, thus by the Eckmann-Hilton argument they agree. We now observe that the monoid $\mathbf AE(1_+)$ is actually a group, since $\pi_0(\mathbf AE(1_+)(1_+))$ is the 0-th stable homotopy group of $E(1_+)$. It follows that $\Gamma^{(0)}E$ is very special.

As a next step, we compose with taking connective covers to obtain a special $\Gamma$-spectrum in connective $\Omega$-spectra
\[
E_{\geq 0}: \Gamma^{op}\to \catname{Spt}_{\geq 0}.
\]
Note that $\Gamma^{(0)}E=\Gamma^{(0)}E_{\geq 0}$. We now consider the spectra associated to the $\Gamma$-spaces $\Gamma^{(k)}E_{\geq 0}$. That is, we apply May-Thomason's replacement followed by Segal's functor to obtain a sequence of connective $\Omega$-spectra
\[
\mathbf BW\Gamma^{(k)}E_{\geq 0} \ \ \mathrm{for}\ k \in \N.
\]
Now by lemma \ref{cofibrant} we have the following equivalence
\[
	\mathbf BW\Gamma^{(k)}E_{\geq 0} \xrightarrow{\simeq} \mathbf BW\Omega\Gamma^{(k+1)}E_{\geq 0} \xrightarrow{\simeq}\mathbf B\Omega W\Gamma^{(k+1)}E_{\geq 0}.
\]
By lemma \ref{switch} we have a map $\mathbf B\Omega W\Gamma^{(k)}E\to \Omega \mathbf BW\Gamma^{(k)}E$ which is the identity on $0$-th spaces. In particular, since both spectra are $\Omega$-spectra this map is an equivalence on connective covers. We now observe that since $E(1_+)_{\geq 0}$ is a connective $\Omega$-spectrum, $\Omega \mathbf BW\Gamma^{(k)}E$ is connective for $k\geq 1$ and hence we obtain a stable equivalence
\[
	\mathbf B\Omega W\Gamma^{(k)}E\to \Omega \mathbf BW\Gamma^{(k)}E
\]
for $k \geq 1$. Putting all these maps together we obtain a sequence of connective $\Omega$-spectra $\mathbf BW\Gamma^{(k)}E_{\geq 0}$ together with  stable equivalences
\[
	\mathbf BW\Gamma^{(k)}E_{\geq 0} \xrightarrow{\simeq} \mathbf BW\Omega\Gamma^{(k+1)}E_{\geq 0} \xrightarrow{\simeq}\mathbf B\Omega W\Gamma^{(k+1)}E_{\geq 0}\xrightarrow{\simeq}\Omega \mathbf BW\Gamma^{(k+1)}E_{\geq 0}
\]
that is we have $\mathbf BW\Gamma^{(k)}E_{\geq 0} \xrightarrow{\simeq}\Omega \mathbf BW\Gamma^{(k+1)}E_{\geq 0}$. Thus we are in the situation of lemma \ref{upandacross} and conclude that
\[
\mathbf B W\Gamma^{(0)}E_{\geq 0} = \mathbf B W\Gamma^{(0)}E \simeq E(1_+)_{\geq 0}.
\]
\end{proof}

In light of obtaining the right stable homotopy type, we will from now on assume that we replace a $\Gamma$-space $A$ by $WA$ before applying the functor $\mathbf B$. That is, in what follows $\mathbf BA$ will mean $\mathbf BWA$.

We start with constructing a $\Gamma$-space model for the (connective cover of the) spectrum $\psi_\theta$. Recall that $\psi_\theta$ has as $n$-th space the space $\psi_\theta(\infty,n+1)$ and structure maps given by translation of manifolds in the $(n+1)$ coordinate. The idea is that the spaces $\psi_\theta(\infty,n)$ come with a preferred monoid structure, namely taking disjoint union of manifolds. To make this precise, we introduce the following notation.

\begin{Definition}
Let $\theta:X \to BO(d)$ be a Serre fibration. We obtain for each $m \in \N$ the Serre fibration
\[
\coprod_m \theta: \coprod_m X \to BO(d).
\]
We denote this Serre fibration by $\theta(m_+)$.
\end{Definition}

We can now associate to each $m_+ \in \Gamma^{op}$ the space $\Psi_{\theta(m_+)}(\R^n)$. We think of elements of $\Psi_{\theta(m_+)}(\R^n)$ as manifolds with components labeled by non-basepoint elements of $m_+$ together with $\theta$-structures on those labeled components.

\begin{Lemma}\label{gammatheta}
For all $n \in \N$, the spaces $\Psi_{\theta(m_+)}(\R^n)$ assemble into a $\Gamma$-space.
\end{Lemma}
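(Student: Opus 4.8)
\emph{Plan.} The plan is to promote, for fixed $n$, the assignment $m_+ \mapsto \Psi_{\theta(m_+)}(\R^n)$ (pointed by the empty manifold) to a functor $\Gamma^{op} \to \catname S_\ast$, and the only non-formal point will be continuity of the structure maps. Given a morphism $\phi : m_+ \to l_+$ in $\Gamma^{op}$, write $X_\phi \subseteq \coprod_m X$ for the clopen subspace consisting of the copies of $X$ indexed by those $i \in \{1,\dots,m\}$ with $\phi(i) \neq \ast$, and let $r_\phi : X_\phi \to \coprod_l X$ be the map sending the $i$-th copy of $X$ identically onto the $\phi(i)$-th copy; then $\theta(l_+) \circ r_\phi = \theta(m_+)|_{X_\phi}$. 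Define
\[
\phi_\ast : \Psi_{\theta(m_+)}(\R^n) \longrightarrow \Psi_{\theta(l_+)}(\R^n), \qquad (M,\ell) \longmapsto \big( \ell^{-1}(X_\phi),\ r_\phi \circ \ell|_{\ell^{-1}(X_\phi)} \big).
\]
Since $\ell$ is continuous and $X_\phi$ is clopen, $\ell^{-1}(X_\phi)$ is clopen in $M$, hence a $d$-dimensional manifold without boundary that is closed as a subset of $\R^n$, and the displayed compatibility of $r_\phi$ with the reference maps to $BO(d)$ shows that $r_\phi\circ\ell|$ is a $\theta(l_+)$-structure on it. Thus $\phi_\ast$ is a well-defined map of sets, and it is pointed since $\ell^{-1}(X_\phi) = \emptyset$ when $M = \emptyset$.

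Functoriality, $(\mathrm{id})_\ast = \mathrm{id}$ and $(\psi\phi)_\ast = \psi_\ast \circ \phi_\ast$, is a purely combinatorial verification resting on the identities $X_{\psi\phi} = r_\phi^{-1}(X_\psi)$ and $r_{\psi\phi} = r_\psi \circ r_\phi|_{X_{\psi\phi}}$, which I would simply record. The substantive point is that each $\phi_\ast$ is continuous. To reduce the work, I would factor an arbitrary $\phi : m_+ \to l_+$ as $\phi = \iota\circ\pi$ where, with $S := \phi^{-1}(\{1,\dots,l\})$, the map $\pi : m_+ \to S_+$ is the identity on $S$ and collapses the complement to $\ast$, and $\iota : S_+ \to l_+$ is $\phi|_S$ extended by $\ast\mapsto\ast$, so that $\iota^{-1}(\ast) = \{\ast\}$. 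By functoriality it then suffices to establish continuity of $\iota_\ast$ and of $\pi_\ast$ separately.

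For $\iota$ — and in general for any \emph{total} based map, i.e. one with $\phi^{-1}(\ast) = \{\ast\}$ — one has $X_\phi = \coprod_m X$, so that $r_\phi : \coprod_m X \to \coprod_l X$ is an honest map of Serre fibrations over $BO(d)$ and $\phi_\ast$ is simply $(M,\ell) \mapsto (M, r_\phi\circ\ell)$. Its continuity is an instance of the functoriality of $\theta \mapsto \Psi_\theta(\R^n)$ in the fibration, which is immediate from the description of the topology on $\Psi_\theta(\R^n)$ in \cite{galatiusrandal} and \cite{galatius}: a neighbourhood basis at a point is cut out by $C^\infty$-proximity of the underlying manifolds over compact subsets of $\R^n$ together with proximity of the $\theta$-structure maps, and both conditions are preserved under post-composition with the fixed map $r_\phi$.

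The map $\pi_\ast$, by contrast, discards from $(M,\ell)$ exactly the components of $M$ whose label — the composite of $\ell$ with $\coprod_m X \to \{1,\dots,m\}$ — lies outside $S$, and I expect its continuity to be the main obstacle. I would argue directly from the local picture: given $(M,\ell)$ and a basic neighbourhood of $\pi_\ast(M,\ell)$ determined by a compact $K\subset\R^n$ (and the attendant $\epsilon$), choose a basic neighbourhood of $(M,\ell)$ small enough that every $(N,\ell_N)$ in it agrees with $(M,\ell)$ up to a small perturbation over a neighbourhood of $K$, i.e. the part of $N$ there is the graph of a small normal section over the corresponding part of $M$ and carries a $\theta$-structure close to $\ell$. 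Because the label is locally constant and takes values in a discrete set, this forced proximity of the structures makes each matched piece of $N$ carry the same label as the piece of $M$ it lies over; hence over a neighbourhood of $K$ the manifold $\ell_N^{-1}\big(\coprod_{i\in S}X\big)$ is precisely the graph of a small section over $\ell^{-1}\big(\coprod_{i\in S}X\big)$ with nearby $\theta$-structure, so $\pi_\ast(N,\ell_N)$ lies in the prescribed neighbourhood of $\pi_\ast(M,\ell)$. The essential inputs are therefore the explicit local description of the topology of $\Psi_\theta(\R^n)$ from \cite{galatiusrandal} and the elementary observation that the labelling of a manifold equipped with a $\theta(m_+)$-structure cannot jump along a continuous path.
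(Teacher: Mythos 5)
Your construction of the induced maps is exactly the one in the paper: restrict to the components labelled by indices not sent to the basepoint and relabel via the evident map $\coprod_{\sigma^{-1}(k_+\setminus\{\ast\})} X \to \coprod_{k_+\setminus\{\ast\}} X$, with the empty manifold as basepoint. The paper simply asserts functoriality is easy to see and does not discuss continuity, so your careful verification of continuity (via the factorization into a total map and a collapse map, and the local constancy of labels) supplies detail the paper leaves implicit, but the approach is the same.
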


\begin{proof}
	We have to define the induced maps. Let $\sigma: m_+ \to k_+$ be a map of based sets. We obtain a map
\[
\coprod_{\sigma^{-1}(k_+\setminus \{\ast\})} X \to \coprod_{k_+\setminus \{\ast\}} X. 	
\]
Now define the induced map $\Psi_{\theta(m_+)}(\R^n)\to \Psi_{\theta(k_+)}(\R^n)$ as follows. The image of a pair $(M,l)$ is given by the manifold
\[
M':= l^{-1}\left(\coprod_{\sigma^{-1}(k_+\setminus\{\ast\})}X\right)
\]
together with $\theta(k_+)$-structure given by the composition
\[
M' \xrightarrow{l|_{M'}}\coprod_{\sigma^{-1}(k_+\setminus\{\ast\})}X \to \coprod_{k_+\setminus\{\ast\}}X.
\]
In other words, we relabel the components of $M$ and forget about those components, which get labeled by the basepoint. Taking the empty manifold as basepoint, it is easy to see that this is functorial in $\Gamma^{op}$.
\end{proof}
Note that $\Psi_{\theta(0_+)}\cong \ast$ consisting of only the empy manifold and that we have $\Psi_{\theta(1_+)}(\R^n)= \Psi_\theta(\R^n)$. Also note that we obtain by restriction for any $k \geq 1$ the $\Gamma$-spaces
\[
m_+ \mapsto \psi_{\theta(m_+)}(\infty,k).
\]
As mentionend above, the $\Gamma$-structure can be thought of as taking disjoint union of manifolds. Below we will see that, when stabilizing to $\R^\infty$, taking disjoint union gives a homotopy coherent multiplication on our spaces of manifolds.

\begin{Lemma}
	The spectra $\psi_{\theta(m_+)}$ assemble into a projectively fibrant $\Gamma$-spectrum.
\end{Lemma}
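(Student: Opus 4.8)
The plan is to check the two conditions separately, since a $\Gamma$-spectrum $E\colon \Gamma^{op}\to\catname{Spt}$ is projectively fibrant precisely when it is objectwise fibrant, i.e. when each $E(m_+)$ is an $\Omega$-spectrum. So I must (i) produce the functor $m_+\mapsto\psi_{\theta(m_+)}$, and (ii) verify that each $\psi_{\theta(m_+)}$ is an $\Omega$-spectrum.

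For (i) I would start from the restricted $\Gamma$-spaces noted after Lemma \ref{gammatheta}: for each fixed $k\geq 1$ the spaces $\psi_{\theta(m_+)}(\infty,k)=\colim_n\psi_{\theta(m_+)}(n,k)$ form a $\Gamma$-space. It then remains to see that the spectrum structure maps, namely the adjoints of the translations $S^1\wedge\psi_{\theta(m_+)}(\infty,n+1)\to\psi_{\theta(m_+)}(\infty,n+2)$, $(t,M)\mapsto M-t\cdot e_{n+2}$, are natural in $m_+$, so that the levelwise $\Gamma$-space structure upgrades to a functor $\Gamma^{op}\to\catname{Spt}$. This is geometrically clear: the $\Gamma^{op}$-action of Lemma \ref{gammatheta} relabels the components of a manifold and discards those sent to the basepoint, which affects neither the ambient embedding nor the $\theta$-structures on the surviving components, whereas translation in the last coordinate acts uniformly on the whole embedded manifold; the two operations commute, and they are compatible with the inclusions $\R^n\hookrightarrow\R^{n+1}$ and with passing to adjoints. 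Hence one obtains $E\colon\Gamma^{op}\to\catname{Spt}$, $E(m_+)=\psi_{\theta(m_+)}$.

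For (ii), observe first that $\theta(m_+)=\coprod_m\theta$ is again a Serre fibration (a disk is connected, so a lifting problem for $\coprod_m\theta\to BO(d)$ lands in a single summand, where one solves it using that $\theta$ is a Serre fibration), hence it is a legitimate input to the constructions of Section \ref{sec:spacesofmanifolds}. Theorem \ref{adj}, applied to $\theta(m_+)$, gives weak homotopy equivalences $\psi_{\theta(m_+)}(n,k)\to\Omega\psi_{\theta(m_+)}(n,k+1)$ for all $n$, and exactly as in the paragraph following the definition of $\psi_\theta$ — using that $\Omega$ commutes up to weak equivalence with the sequential colimit along the closed inclusions $\psi_{\theta(m_+)}(n,k)\hookrightarrow\psi_{\theta(m_+)}(n+1,k)$ — this shows that $\psi_{\theta(m_+)}$ is an $\Omega$-spectrum, i.e. fibrant in $\catname{Spt}$. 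Consequently $E$ is projectively fibrant.

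The only genuinely delicate point is the naturality assertion in step (i): one must confirm that relabeling/forgetting components, translating in the last coordinate, stabilizing $\R^n\to\R^\infty$, and forming adjoints are all mutually compatible. Geometrically this is transparent — translation does not see the component labels — but a careful argument requires unwinding the precise definition of the induced $\Gamma$-maps in Lemma \ref{gammatheta} together with that of the structure maps, and invoking the standard fact that $\Omega$ preserves the sequential colimits in question.
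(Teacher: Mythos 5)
Your proposal is correct and follows essentially the same route as the paper: the substance of the paper's proof is exactly your step (i), namely that the translation structure maps commute with the relabeling maps of Lemma \ref{gammatheta}. Your step (ii) (each $\psi_{\theta(m_+)}$ being an $\Omega$-spectrum via Theorem \ref{adj} applied to the Serre fibration $\theta(m_+)$) is left implicit in the paper, having already been established for an arbitrary Serre fibration $\theta$ right after the definition of the spectrum $\psi_\theta$; spelling it out as you do is harmless and slightly more complete.
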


\begin{proof}
	By the above lemma we have for each $n\in \N$ and each map of finite pointed sets $\sigma: m_+ \to k_+$ a map
	\[
	\sigma^n_\ast: \psi_{\theta(m_+)}(\infty,n+1) \to \psi_{\theta(k_+)}(\infty,n+1)
\]
functorial in $\Gamma^{op}$ for fixed $n$. Thus, we have to show that these maps commute with the structure maps, that is we need to show that the diagram
\[
\begin{tikzcd}
	S^1\wedge \psi_{\theta(m_+)}(\infty,n+1) \rar \dar[swap]{id\wedge \sigma^n_\ast} & \psi_{\theta(m_+)}(\infty,n+2)\dar{\sigma^{n+1}_\ast}\\
	S^1\wedge \psi_{\theta(k_+)}(\infty,n+1) \rar & \psi_{\theta(k_+)}(\infty,n+2)
\end{tikzcd}
\]
commutes. But this is clear since the structure maps just translate the manifolds in the $(n+1)$ coordinate, while the map $\sigma^n_\ast$ relabels the components.
\end{proof}

\begin{Definition}
	We denote by $\Gamma \psi_\theta$ the $\Gamma$-spectrum
	\[
	m_+ \mapsto \psi_{\theta(m_+)}.
\]
\end{Definition}

To avoid akward notation, we will denote the induced $\Gamma$-spaces $\Gamma^{(k)}(\Gamma\psi_\theta)$ simply by $\Gamma^{(k)}\psi_\theta$.

\begin{Proposition}\label{proppsi}
	The $\Gamma$-space $\Gamma^{(0)}\psi_\theta$ is a model for the connective cover of $\psi_\theta$, i.e. there is a stable equivalence
	\[
	\mathbf B  \Gamma^{(0)}\psi_\theta \simeq \psi_{\theta, \geq 0}.
\]
\end{Proposition}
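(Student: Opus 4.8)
The plan is to deduce the proposition from Lemma~\ref{key} applied to the $\Gamma$-spectrum $E:=\Gamma\psi_\theta$, $m_+\mapsto\psi_{\theta(m_+)}$. We already know that $E$ is a $\Gamma$-spectrum, and it is projectively fibrant because Theorem~\ref{adj}, applied to the Serre fibration $\theta(m_+):\coprod_m X\to BO(d)$, shows that each $\psi_{\theta(m_+)}$ is an $\Omega$-spectrum. Since $\theta(1_+)=\theta$, we have $E(1_+)=\psi_\theta$. Hence, granting that $E$ is \emph{special}, Lemma~\ref{key} says that $\Gamma^{(0)}\psi_\theta=\Gamma^{(0)}(\Gamma\psi_\theta)$ is a model for the connective cover of $\psi_\theta$, which is exactly the asserted stable equivalence $\mathbf B\Gamma^{(0)}\psi_\theta\simeq\psi_{\theta,\geq 0}$. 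So the entire proof reduces to showing that $\Gamma\psi_\theta$ is special.

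Specialness means that for each $m$ the Segal map $\psi_{\theta(m_+)}\to\prod_m\psi_\theta$ is a stable equivalence; as both sides are $\Omega$-spectra this is equivalent to it being a levelwise weak homotopy equivalence, i.e.\ to showing that for every $k\geq 1$ the map $\mu\colon\psi_{\theta(m_+)}(\infty,k)\to\prod_{i=1}^m\psi_\theta(\infty,k)$ sending a labelled manifold $M$ to the tuple $(\rho_{1\ast}M,\dots,\rho_{m\ast}M)$ of its label-$i$ parts is a weak homotopy equivalence. This is the precise incarnation of the slogan that disjoint union is a homotopy coherent multiplication on the spaces $\psi_\theta(\infty,k)$.

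I would prove this by producing an explicit homotopy inverse that ``spreads a tuple into disjoint slabs''. Fix $m$ and choose $\lambda\in(0,1/(m+1))$ together with points $t_1<\dots<t_m$ in $(-1,1)$ with $|t_i-t_j|>2\lambda$ and $t_i\pm\lambda\in(-1,1)$. On a finite stage set $c(M_1,\dots,M_m):=\coprod_{i=1}^m\bigl(s_\lambda(M_i)+t_i e_{k+1}\bigr)$, the $i$-th summand carrying the label $i$ and the $\theta$-structure inherited from $M_i$, where $s_\lambda$ rescales the compact coordinates by $\lambda$. By the choice of the $t_i$ the summands lie in pairwise disjoint slabs of the $(k{+}1)$st coordinate, so $c$ lands in $\psi_{\theta(m_+)}$; since only already-present coordinates are used, $c$ is compatible with $\R^n\subset\R^{n+1}$ and descends to a continuous map on colimits. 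The composite $\mu\circ c$ is, up to reordering, the product over $i$ of $M_i\mapsto s_\lambda(M_i)+t_i e_{k+1}$, which is homotopic to the identity factorwise by letting $\lambda\nearrow 1$ and $t_i\searrow 0$ (a legal path, since the $(k{+}1)$st coordinate stays in $(-1,1)$ throughout). The composite $c\circ\mu$ sends a labelled $M$ with disjoint pieces $M_i$ to $\coprod_i(s_\lambda M_i+t_i e_{k+1})$, and I would homotope it back to $M$ in three moves: first push the $i$-th piece to height $t_i$ in a fresh compact coordinate; then, since the pieces now lie in distinct hyperplanes, un-rescale and un-translate each in the $(k{+}1)$st coordinate; finally let the fresh coordinate fall back to $0$.

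The one real subtlety — and the step I expect to be the main obstacle — is that this last homotopy uses an extra coordinate and hence is not defined uniformly over $\psi_{\theta(m_+)}(\infty,k)$. I would get around this by checking instead that $\mu$ induces isomorphisms on homotopy groups, using that $\pi_\ast$ of these colimit spaces is the colimit of $\pi_\ast$ of the finite stages; each homotopy-group computation then involves a single map out of a sphere landing at a fixed finite stage $n$, where the coordinate $n{+}1$ is available for the homotopy above. Continuity of disjoint union on the locus of well-separated families, and of rescaling and translation on $\Psi_\theta(\R^n)$, are standard properties of these spaces of manifolds, cf.\ \cite{galatiusrandal}. Together these give that $\Gamma\psi_\theta$ is special, and the proposition follows from Lemma~\ref{key} as in the first paragraph.
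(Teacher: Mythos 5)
Your proposal is correct and follows essentially the same route as the paper: reduce to Lemma~\ref{key} by showing that $\Gamma\psi_\theta$ is a special $\Gamma$-spectrum, which (since each $\psi_{\theta(m_+)}$ is an $\Omega$-spectrum by Theorem~\ref{adj}) comes down to a geometric argument separating the manifolds $M_1,\dots,M_m$ via rescalings and translations homotopic to the identity. The paper packages that last step slightly differently --- it identifies the Segal map as an embedding onto the locus of pairwise disjoint tuples and exhibits this locus as a weak deformation retract of $\prod_m\psi_\theta(\infty,k+1)$ by pushing each factor into a distinct slice of a freshly inserted coordinate --- but this is the same mechanism as your explicit homotopy inverse $c$, including the need for an extra coordinate that you rightly resolve at the level of homotopy groups of the finite stages.
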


\begin{proof}
	We show that $\Gamma \psi_\theta$ is a special $\Gamma$-spectrum. The assertion then follows from lemma \ref{key}.  Since $\psi_{\theta(m_+)}$ is an $\Omega$-spectrum for all $m_+\in \Gamma^{op}$, it suffices to show that $\Gamma^{(k)}\psi_\theta$ is a special $\Gamma$-space.
	
	We observe that the Segal map for $\Gamma^{(k)}\psi_\theta$ 
	\[
	\Gamma^{(k)}\psi_\theta(m_+)\to \prod_m \Gamma^{(k)}\psi_\theta(m_+)
\]
is an embedding and we identify its image with a subspace of the product space. This subspace can be characterized as follows. A tuple $(M_1,\ldots,M_m)$ lies in this subspace if and only if $M_i \cap M_j = \emptyset \subset \R^\infty$ for all $i \neq j$. We show that this subspace is a weak deformation retract of the product space
\[
	\prod_m \Gamma^{(k)}\psi_\theta(m_+)= \prod_m \psi_\theta(\infty,k+1).
\]
To this end, we need a map making manifolds (or more generally any subsets) disjoint inside $\R^\infty$. Consider the maps
\begin{align*}
	F: \R^\infty &\to \R^\infty\\
	(x_1,x_2,\ldots) &\mapsto (0,x_1,x_2,\ldots)
\end{align*}
as well as for any $a\in \R$ the map
\begin{align*}
	G_a: \R^\infty &\to \R^\infty\\
	(x_1,x_2,\ldots) &\mapsto (a+x_1,x_2,\ldots).
\end{align*}
These maps are clearly homotopic to the identity via a straight line homotopy. Choosing $a\in (-1,1)$, the composition $G_a\circ F: \R^\infty \to \R^\infty$ induces a self-map 
\[
\psi_\theta(\infty,k+1)\to \psi_\theta(\infty,k+1)
\]
which is homotopic to the identity. Using for each factor of the product space $\prod_m \psi_\theta(\infty,k+1)$ a different (fixed) real number gives a map
\[
	\prod_m \psi_\theta(\infty,k+1)\to \prod_m \psi_\theta(\infty,k+1)
\]
which is our desired deformation retract, this is also illustrated in figure \ref{fig}.
\begin{figure}
\centering
\def\svgwidth{\columnwidth}
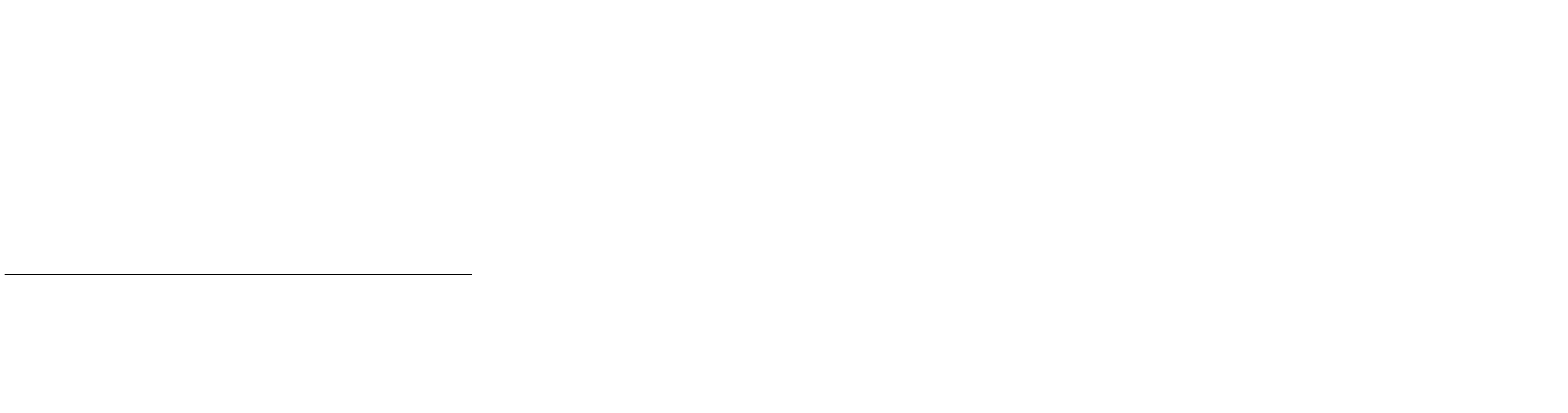
\caption{Making manifolds disjoint}
\label{fig}
\end{figure}
\end{proof}

Recall from lemma \ref{gammatheta} that the association
\[
m_+ \mapsto \Psi_{\theta(m_+)}(\R^n)
\]
defines a $\Gamma$-space for all $n\in \N$.

\begin{Definition}
	Denote by $\Gamma \Psi_\theta$ the (levelwise) colimit of $\Gamma$-spaces
	\[
	\Gamma\Psi_\theta(m_+):= \colim_{n\in \N} \Omega^{n-1}\Psi_{\theta(m_+)}(\R^n).
\]
\end{Definition}
We obtain equivalences
\[
\Gamma^{(0)}\psi_\theta(m_+)= \psi_{\theta(m_+)}(\infty,1) \xrightarrow{\simeq} \colim_{n\in \N} \Omega^{n-1}\Psi_{\theta(m_+)}(\R^n)= \Gamma \Psi_\theta(m_+)
\]
which are clearly functorial in $\Gamma^{op}$. Hence we obtain a levelwise equivalence of $\Gamma$-spaces $\Gamma^{(0)}\psi_\theta\xrightarrow \simeq \Gamma \Psi_\theta$.

\begin{Corollary}
	The $\Gamma$-space $\Gamma \Psi_\theta$ is a model for the connective cover of the spectrum $\psi_\theta$.
\end{Corollary}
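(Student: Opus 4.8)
The plan is to obtain this as an essentially formal consequence of Proposition \ref{proppsi} together with the levelwise equivalence of $\Gamma$-spaces $\Gamma^{(0)}\psi_\theta\xrightarrow{\simeq}\Gamma\Psi_\theta$ constructed just above the statement. Recall that, under our standing convention, $\mathbf B$ abbreviates $\mathbf BW$, and that to say a $\Gamma$-space $A$ is a model for the connective cover $\psi_{\theta,\geq 0}$ means exactly that there is a stable equivalence $\mathbf B A\simeq \psi_{\theta,\geq 0}$.

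First I would record that $\Gamma^{(0)}\psi_\theta$ is a very special $\Gamma$-space: indeed, the proof of Proposition \ref{proppsi} establishes that $\Gamma\psi_\theta$ is a projectively fibrant special $\Gamma$-spectrum, and the argument in the proof of Lemma \ref{key} shows precisely that $\Gamma^{(0)}$ of such a $\Gamma$-spectrum is very special. Next, since $\Gamma^{(0)}\psi_\theta\to\Gamma\Psi_\theta$ is a levelwise weak homotopy equivalence, and since both the Segal condition and the condition that the monoid $\pi_0$ of the underlying space be a group are invariant under levelwise weak equivalence, the $\Gamma$-space $\Gamma\Psi_\theta$ is very special as well. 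We are therefore in a position to apply the third bullet of Lemma \ref{cofibrant} to the weak equivalence $\Gamma^{(0)}\psi_\theta\simeq\Gamma\Psi_\theta$ of very special $\Gamma$-spaces, which yields a stable equivalence $\mathbf BW\Gamma\Psi_\theta\simeq\mathbf BW\Gamma^{(0)}\psi_\theta$. Combining this with the conclusion of Proposition \ref{proppsi}, namely $\mathbf B\Gamma^{(0)}\psi_\theta\simeq\psi_{\theta,\geq 0}$, gives the desired stable equivalence $\mathbf B\Gamma\Psi_\theta\simeq\psi_{\theta,\geq 0}$.

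There is no substantial obstacle in this argument; the only point that deserves a moment's care is verifying that $\Gamma\Psi_\theta$ is very special, so that Lemma \ref{cofibrant} genuinely applies. One could alternatively bypass even this by arguing directly that $\mathbf BW$ carries levelwise weak equivalences of $\Gamma$-spaces to stable equivalences of spectra, but invoking very specialness keeps the argument within the hypotheses already set up in the preceding lemmas. The reason I would bother stating the Corollary separately at all is that $\Gamma\Psi_\theta$, rather than $\Gamma^{(0)}\psi_\theta$, is the $\Gamma$-space that will be directly compared with the cobordism category and with the Madsen--Tillmann $\Gamma$-space in the subsequent sections, so it is convenient to have its stable homotopy type on record.
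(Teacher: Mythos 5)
Your argument is correct and is precisely the route the paper intends: the Corollary is stated without proof as an immediate consequence of Proposition \ref{proppsi} and the levelwise equivalence $\Gamma^{(0)}\psi_\theta\xrightarrow{\simeq}\Gamma\Psi_\theta$, combined via the third bullet of Lemma \ref{cofibrant}. Your care in checking that $\Gamma\Psi_\theta$ is itself very special (so that Lemma \ref{cofibrant} applies) is a detail the paper leaves implicit, and it is handled correctly.
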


We now construct a $\Gamma$-space model for the Madsen-Tilmann spectrum $MT\theta(d)$ and we will show in the next section that this $\Gamma$-space is equivalent to $\Gamma\Psi_\theta$. As before, we will use the Serre fibrations $\theta(m_+)$. First note that the Thom spectrum construction commutes with colimits over $BO(d)$.

\begin{Definition}
	Define the $\Gamma$-spectrum $\Gamma MT \theta(d):\Gamma^{op}\to \catname{Spt}$ by setting
	\[
	\Gamma MT \theta(d)(m_+):= MT \theta(m_+)(d).
\]
For any based map $\sigma: m_+ \to k_+$, define the induced map to be the fold map
\[
	\Gamma MT \theta(d)(m_+) \cong \bigvee_m MT \theta(d) \to \bigvee_k MT \theta(d)\cong \Gamma MT \theta(d)(k_+).
\]
\end{Definition}

As before we will denote the induced $\Gamma$-spaces by $\Gamma^{(k)}MT \theta(d)$ for all $k \in \N$.

\begin{Proposition}\label{propmt}
	The $\Gamma$-space $\Gamma^{(0)}MT \theta(d)$ is a model for the connective cover of the spectrum $MT \theta(d)$.
\end{Proposition}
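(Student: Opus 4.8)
The plan is to apply Lemma \ref{key} to the $\Gamma$-spectrum $E := \Gamma MT\theta(d)$, exactly as in the proof of Proposition \ref{proppsi}. Since $\Gamma MT\theta(d)(1_+) = MT\theta(1_+)(d) = MT\theta(d)$, once we know that $\Gamma MT\theta(d)$ is projectively fibrant and special, Lemma \ref{key} produces a stable equivalence $\mathbf B\Gamma^{(0)}MT\theta(d) \simeq MT\theta(d)_{\geq 0}$, which is the assertion. So the proof reduces to checking these two hypotheses.

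For projective fibrancy: by construction $MT\theta(m_+)(d)$ is a fibrant replacement of $T\theta(m_+)(d)$, in particular an $\Omega$-spectrum, hence a fibrant object of $\catname{Spt}$. As $\Gamma^{op}$ carries no extra structure, fibrations in the projective model structure on $\Gamma$-spectra are objectwise, so $\Gamma MT\theta(d)$, being objectwise fibrant, is projectively fibrant. (That $\Gamma MT\theta(d)$ is a well-defined functor is already part of its definition.)

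For speciality we must show that the Segal map $\Gamma MT\theta(d)(m_+) \to \prod_m \Gamma MT\theta(d)(1_+)$ is a stable equivalence. First I would identify the values: the pullback $X(m_+)(\R^n)$ of the disjoint union $\coprod_m X \to BO(d)$ along $Gr_d(\R^n) \to BO(d)$ is $\coprod_m X(\R^n)$, and the Thom space of a vector bundle over a disjoint union is the wedge of the Thom spaces of its restrictions; hence $T\theta(m_+)(d)_n \cong \bigvee_m T\theta(d)_n$ compatibly with the structure maps, so $T\theta(m_+)(d) \cong \bigvee_m T\theta(d)$ and, after fibrant replacement, $MT\theta(m_+)(d) \simeq \bigvee_m MT\theta(d)$. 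Under this identification the map induced by $\rho_i\colon m_+ \to 1_+$ is the map that is the identity on the $i$-th wedge summand and trivial on the others, so the Segal map becomes the canonical comparison map
\[
\bigvee_m MT\theta(d) \to \prod_m MT\theta(d).
\]
The key point is now formal: in $\catname{Spt}$ the natural map from a finite wedge to the corresponding finite product is a stable equivalence, since stable homotopy groups commute with both finite wedges and finite products and these agree. Therefore the Segal map is a stable equivalence, $\Gamma MT\theta(d)$ is special, and Lemma \ref{key} applies.

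The only point requiring genuine care is the compatibility of the decomposition $MT\theta(m_+)(d) \simeq \bigvee_m MT\theta(d)$ with the $\Gamma$-structure maps and with the passage from $T\theta$ to its fibrant replacement: the functor $\colim_k \Omega^k(-)_{n+k}$ does not literally commute with wedges, so one should observe instead that a finite wedge of $\Omega$-spectra is stably equivalent to the corresponding finite product, which is again an $\Omega$-spectrum, and carry out the identification there. This is in pleasant contrast with Proposition \ref{proppsi}, where speciality rested on the genuinely geometric deformation retraction making manifolds disjoint in $\R^\infty$; here speciality is essentially automatic because finite wedges and finite products of spectra coincide.
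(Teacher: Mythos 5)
Your proposal is correct and follows essentially the same route as the paper: the paper's proof likewise reduces to showing that $\Gamma MT\theta(d)$ is special via the stable equivalence $MT\theta(m_+)(d)\cong \bigvee_m MT\theta(d)\simeq \prod_m MT\theta(d)$ and then invokes Lemma \ref{key}. Your added care about the fibrant replacement not literally commuting with finite wedges is a detail the paper glosses over, but it does not change the argument.
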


\begin{proof}
	It again suffices to show that $\Gamma MT \theta(d)$ is special. But this follows easily since in $\catname{Spt}$ we have a stable equivalence
	\[
	MT\theta(m_+)(d)\cong \bigvee_m MT \theta(d) \simeq \prod_m MT\theta(d).
\]
Thus by lemma \ref{key} we obtain a stable equivalence 
\[
\mathbf B \Gamma^{(0)}MT \theta(d) \simeq MT \theta(d)_{\geq 0}.
\]
\end{proof}
Note that by shifting we have a stable equivalence
\[
\mathbf B \Gamma^{(1)}MT \theta(d) \simeq MT \theta(d)[1]_{\geq 0}.
\]

\section{Equivalence of $\Gamma$-space models} 
\label{sec:equivalence_of_gamma_space_models}

In the previous section we have constructed the $\Gamma$-space models $\Gamma \Psi_\theta$ for $\psi_\theta$ and $\Gamma^{(1)}MT\theta (d)$ for $MT \theta(d)[1]_{\geq 0}$. But more is true, by theorem \ref{maingalran} we have for each $m_+ \in \Gamma^{op}$ a weak equivalence of spaces
\[
\Gamma^{(1)}MT\theta(d)(m_+)=\Omega^\infty MT\theta(m_+)(d)[1] \xrightarrow{\simeq} \colim_{n\in \N} \Omega^{n-1}\Psi_{\theta(m_+)}(\R^n)= \Gamma \Psi_\theta(m_+).
\]
The following lemma shows that these equivalences define a levelwise equivalence of $\Gamma$-spaces.

\begin{Lemma}\label{gammaequivalence}
	The weak equivalences of theorem \ref{maingalran2}
	\[
	Th(\theta_n^\ast \gamma_{d,n}^\perp) \xrightarrow{\simeq}\Psi_\theta (\R^n)
\]
assemble into a map of $\Gamma$-spaces. In particular, we obtain a levelwise equivalence
\[
\Gamma^{(1)}MT\theta(d)\xrightarrow{\simeq} \Gamma\Psi_\theta.
\]
\end{Lemma}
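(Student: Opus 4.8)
The plan is to prove that the equivalence of Theorem~\ref{maingalran2}, applied to the fibrations $\theta(m_+)$, is natural in $m_+\in\Gamma^{op}$ at each finite level $n$, and then to propagate this naturality through the colimit defining the two $\Gamma$-spaces. The first thing to record is what the two $\Gamma$-spaces are at a fixed level $n$. Since the pullback of $\theta(m_+)=\coprod_m\theta$ along $Gr_d(\R^n)\to BO(d)$ is $\coprod_m X(\R^n)$, and the Thom space of a vector bundle over a finite coproduct is the corresponding wedge, there is a natural homeomorphism $Th(\theta(m_+)^\ast_n\gamma_{d,n}^\perp)\cong\bigvee_m Th(\theta^\ast_n\gamma_{d,n}^\perp)$; under it the map induced by $\sigma\colon m_+\to k_+$ is the fold-and-collapse map, sending the $i$-th summand identically to the $\sigma(i)$-th summand if $\sigma(i)\neq\ast$ and to the basepoint otherwise. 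This is precisely the level-$n$ part of $\Gamma MT\theta(d)$ before fibrant replacement. On the other side, $m_+\mapsto\Psi_{\theta(m_+)}(\R^n)$ is the $\Gamma$-space of Lemma~\ref{gammatheta}.

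Next I would check that the maps $\Phi_{m_+}\colon Th(\theta(m_+)^\ast_n\gamma_{d,n}^\perp)\to\Psi_{\theta(m_+)}(\R^n)$ of Theorem~\ref{maingalran2} form a natural transformation of these $\Gamma$-spaces. Away from the basepoint, every point of $\bigvee_m Th(\theta^\ast_n\gamma_{d,n}^\perp)$ lies in a single summand, and $\Phi_{m_+}$ sends a point $(V,u,x)$ in the $i$-th summand to the affine plane $V-u$ carrying the label $i$ and the constant $\theta$-structure at $x$ (the basepoint going to the empty manifold). On these ``single plane, single label'' manifolds the relabel-and-forget operation of Lemma~\ref{gammatheta} does exactly what the fold-and-collapse map does on the corresponding summand: it changes the label $i$ to $\sigma(i)$ keeping the structure, or deletes the plane when $\sigma(i)=\ast$. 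Hence the naturality square commutes set-theoretically, and therefore as a square of continuous maps. Equivalently, $Th$, $\Psi_{(-)}(\R^n)$ and $\Phi$ are all functorial for partial maps of Serre fibrations over $BO(d)$, and $m_+\mapsto\theta(m_+)$ is such a functor on $\Gamma^{op}$, which gives naturality in one stroke.

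Finally I would pass to the colimit. Unwinding definitions, $\Gamma^{(1)}MT\theta(d)(m_+)=\colim_n\Omega^{n-1}Th(\theta(m_+)^\ast_n\gamma_{d,n}^\perp)$ and $\Gamma\Psi_\theta(m_+)=\colim_n\Omega^{n-1}\Psi_{\theta(m_+)}(\R^n)$, with $\Gamma$-structure maps induced levelwise by the maps above. The square comparing $\Omega^{n-1}\Phi_{m_+}$ with $\Omega^{n}\Phi_{m_+}$ through the structure maps of the $T\theta(m_+)(d)$- and $\Psi_{\theta(m_+)}$-systems is exactly the left-hand square of the commutative diagram displayed just before Theorem~\ref{maingalran}, now with $\theta$ replaced by $\theta(m_+)$; it is manifestly natural in the Serre fibration and hence compatible with the $\Gamma$-structure of the first paragraph. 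Applying $\colim_n\Omega^{n-1}$ to the natural transformation of the previous step then yields a natural transformation $\Gamma^{(1)}MT\theta(d)\Rightarrow\Gamma\Psi_\theta$ whose value at each $m_+$ is the weak homotopy equivalence of Theorem~\ref{maingalran}; so it is a levelwise equivalence of $\Gamma$-spaces.

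The step I expect to be the main obstacle is purely organisational: identifying, at each finite level, the abstractly defined fold-map $\Gamma$-structure on $\Gamma MT\theta(d)$ with the concrete relabel-and-forget $\Gamma$-structure coming from Lemma~\ref{gammatheta}, and tracking its compatibility with the colimit $\colim_k\Omega^k$ that computes the fibrant replacement $MT\theta(d)$. Beyond this bookkeeping, no homotopy-theoretic input other than Theorems~\ref{maingalran2} and~\ref{maingalran} is required.
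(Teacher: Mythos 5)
Your argument is correct and follows essentially the same route as the paper: the paper's proof likewise identifies the induced map on Thom spaces as the fold/relabel-and-collapse map on the wedge and observes that this matches the relabel-and-forget description of the induced map on $\Psi_{\theta(-)}(\R^n)$, so the naturality square commutes. Your additional bookkeeping about passing through $\colim_n\Omega^{n-1}$ is the part the paper leaves implicit in the ``in particular'' clause, and you handle it correctly.
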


\begin{proof}
	We need to show that for any map of based sets $\sigma: m_+ \to k_+$ the diagram
	\[
	\begin{tikzcd}
	Th(\theta_n(m_+)^\ast\gamma_{d,n}^\perp) \rar \dar[swap]{\sigma_\ast} & \Psi_{\theta(m_+)}(\R^n) \dar{\sigma_\ast}\\
	Th(\theta_n(k_+)^\ast\gamma_{d,n}^\perp) \rar  & \Psi_{\theta(k_+)}(\R^n)		
	\end{tikzcd}
\]
commutes. But this follows easily since the left hand vertical map is just the fold map. In particular one can view this map as relabeling components of the wedge and mapping components labeled by $\ast$ to the basepoint. On the other hand this is precisely the description of the right hand vertical map.
\end{proof}

We can now prove the first half of our main theorem.

\begin{Theorem}\label{main1}
	There is an equivalence of spectra
	\[
	MT \theta(d)[1]_{\geq 0} \simeq \psi_{\theta,\geq 0}.
\]
\end{Theorem}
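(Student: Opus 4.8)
The plan is to deduce the equivalence by applying the functor $\mathbf B$ (which, per our convention, means $\mathbf B W$) to the levelwise equivalences of $\Gamma$-spaces already at hand, and then identifying both sides via Propositions \ref{proppsi} and \ref{propmt}. Concretely, Lemma \ref{gammaequivalence} furnishes a levelwise equivalence $\Gamma^{(1)}MT\theta(d)\xrightarrow{\simeq}\Gamma\Psi_\theta$, while the equivalences $\Gamma^{(0)}\psi_\theta(m_+)=\psi_{\theta(m_+)}(\infty,1)\xrightarrow{\simeq}\Gamma\Psi_\theta(m_+)$ constructed just before the corollary above assemble into a levelwise equivalence $\Gamma^{(0)}\psi_\theta\xrightarrow{\simeq}\Gamma\Psi_\theta$. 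Stringing these together yields a zig-zag of levelwise equivalences between $\Gamma^{(1)}MT\theta(d)$ and $\Gamma^{(0)}\psi_\theta$.

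The first step is to record that all three $\Gamma$-spaces $\Gamma^{(1)}MT\theta(d)$, $\Gamma\Psi_\theta$ and $\Gamma^{(0)}\psi_\theta$ are very special. For $\Gamma^{(0)}\psi_\theta$ this is contained in the proof of Proposition \ref{proppsi} (via Lemma \ref{key}, whose proof shows that $\Gamma^{(k)}E$ is very special for any projectively fibrant special $\Gamma$-spectrum $E$); the same argument applied to $\Gamma MT\theta(d)$, which is special by Proposition \ref{propmt}, handles $\Gamma^{(1)}MT\theta(d)$; and $\Gamma\Psi_\theta$ is very special because it is levelwise equivalent to $\Gamma^{(0)}\psi_\theta$. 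Since these are levelwise equivalences between very special $\Gamma$-spaces, the third bullet of Lemma \ref{cofibrant} shows that $\mathbf B W$ carries the zig-zag above to a zig-zag of stable equivalences of connective $\Omega$-spectra, so $\mathbf B\Gamma^{(1)}MT\theta(d)\simeq\mathbf B\Gamma\Psi_\theta\simeq\mathbf B\Gamma^{(0)}\psi_\theta$. Finally, $\mathbf B\Gamma^{(0)}\psi_\theta\simeq\psi_{\theta,\geq 0}$ by Proposition \ref{proppsi} and $\mathbf B\Gamma^{(1)}MT\theta(d)\simeq MT\theta(d)[1]_{\geq 0}$ by the shift of Proposition \ref{propmt} noted above, which gives the claimed $MT\theta(d)[1]_{\geq 0}\simeq\psi_{\theta,\geq 0}$.

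I expect this to be essentially bookkeeping rather than a genuine obstacle: the real content — that the Segal maps are stable equivalences and that the relevant monoids are groups — has already been discharged once and for all in the proof of Lemma \ref{key} and in Propositions \ref{proppsi} and \ref{propmt}, and the May--Thomason machinery of Lemmas \ref{switch}, \ref{cofibrant} and \ref{upandacross} is precisely what lets $\mathbf B W$ see a levelwise equivalence of very special $\Gamma$-spaces as a stable equivalence. The one point that requires care is matching the internal shift in $\Gamma^{(1)}$ (evaluation at the first space of the $\Gamma$-spectrum) with the external shift $[1]$ on the Madsen--Tillmann side, and being consistent about the implicit $W$-replacement. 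The compatibility of this spectrum-level equivalence with the space-level equivalences of Theorem \ref{thm1} is not needed here and is deferred to the assembly of the full main theorem.
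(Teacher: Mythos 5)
Your proposal is correct and follows essentially the same route as the paper: combine the levelwise equivalence of Lemma \ref{gammaequivalence} (together with the levelwise equivalence $\Gamma^{(0)}\psi_\theta\xrightarrow{\simeq}\Gamma\Psi_\theta$) with the identifications $\mathbf B\Gamma^{(1)}MT\theta(d)\simeq MT\theta(d)[1]_{\geq 0}$ and $\mathbf B\Gamma\Psi_\theta\simeq\psi_{\theta,\geq 0}$ from Propositions \ref{propmt} and \ref{proppsi}. You spell out slightly more carefully why $\mathbf B W$ preserves these levelwise equivalences (verifying very-specialness so the third bullet of Lemma \ref{cofibrant} applies), which the paper leaves implicit, but the argument is the same.
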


\begin{proof}
	By lemma \ref{gammaequivalence} we have an equivalence of $\Gamma$-spaces
	\[
	\Gamma^{(1)}MT\theta(d)\xrightarrow{\simeq}\Gamma \Psi_\theta.
\]
By proposition \ref{propmt}, $\Gamma^{(1)}MT\theta(d)$ is a model for the spectrum $MT\theta(d)[1]_{\geq 0}$, while by proposition \ref{proppsi} and its corollary the $\Gamma$-space $\Gamma \Psi_\theta$ is a model for the connective cover of $\psi_\theta$. Hence we obtain an equivalence
\[
MT \theta(d)[1]_{\geq 0} \simeq \mathbf B \Gamma^{(1)}MT\theta(d) \simeq \mathbf B\Gamma \Psi_\theta \simeq \psi_{\theta,\geq 0}.
\]
\end{proof}

\section{The cobordism category} 
\label{sec:the_cobordism_category}

In the previous section we have exhibited an equivalence between the connective covers of the spectra $MT\theta(d)[1]$ and $\psi_\theta$. It remains to relate these spectra to the (classifying space of the) topological cobordism category. 

Classically, the cobordism category is a symmetric monoidal category with monoidal product given by taking disjoint union of manifolds. We will see that this is also true for the topological variant in a sense we will make precise below. In particular, having a symmetric monoidal structure endows the classifying space of the cobordism category with the structure of an infinite loop space and we will see that the cobordism category is equivalent as such to the infinite loop space associated to $MT\theta(d)[1]$.

There have appeared several definitions of the cobordism category as a category internal to topological spaces, which all have equivalent classifying spaces. The relevant model for us will be the topological poset model of \cite{galatiusrandal}. We recall its definition. Define the subspace
\[
D_\theta \subset \R\times \psi_\theta(\infty,1)
\]
consisting of pairs $(t,M)$ where $t\in \R$ is a regular value of the projection onto the first coordinate $ M \subset \R \times (-1,1)^\infty \to \R$. Order its elements by $(t,M)\leq (t',M')$ if and only if $t\leq t'$ with the usual order on $\R$ and $M=M'$. 

\begin{Definition}
	The $d$-dimensional cobordism category $\mathrm{Cob}_\theta(d)$ is the topological category associated to the topological poset $D_\theta$. That is, its space of objects is given by $ob\mathrm{(Cob}_\theta(d))=D_\theta$ and its space of morphisms is given by the subspace $mor\text{(Cob}_\theta(d))\subset \R^2 \times \psi_\theta(\infty,1)$ consisting of triples $(t_0,t_1,M)$ where $t_0\leq t_1$. The source and target maps are simply given by forgetting regular values.
\end{Definition}

Taking the internal nerve yields a simplicial space
\[
N_\bullet \mathrm{Cob}_\theta(d): \Delta^{op}\to \catname S.
\]
We will also write $\mathrm{Cob}_\theta(d)$ for the simplicial space obtained from taking the nerve and write $\mathrm{Cob}_\theta(d)_k$ for the space of $k$-simplices.

Considering $\psi_\theta(\infty,1)$ as a constant simplicial space, we have a forgetful map of simplicial spaces $\mathrm{Cob}_\theta(d)\to \psi_\theta(\infty,1)$ defined on $k$-simplices by
\begin{align*}
	\mathrm{Cob}_\theta(d)_k &\to \psi_\theta(\infty,1)\\
	(\underline t,M) &\mapsto M.
\end{align*}

\begin{Theorem}\label{maingal}
	The forgetful map induces a weak equivalence 
	\[
	B\mathrm{Cob}_\theta(d)\xrightarrow{\simeq}\psi_\theta(\infty,1)
\]
where $B\mathrm{Cob}_\theta(d)$ is the realization of the simplicial space $\mathrm{Cob}_\theta(d)$.
\end{Theorem}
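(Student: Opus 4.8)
This is proved in \cite{galatiusrandal}; the approach I would take is the following. The idea is to present $B\mathrm{Cob}_\theta(d)$ as the (thick) realisation of a semi-simplicial space of manifolds carrying finitely many ordered regular values, to note that the forgetful map then becomes an augmentation over $\psi_\theta(\infty,1)$, and to prove that this augmentation is a weak equivalence because it is a microfibration with weakly contractible fibres. More precisely: since $D_\theta$ is a topological poset, the degeneracies of its nerve are closed cofibrations, so $N_\bullet\mathrm{Cob}_\theta(d)$ is good and the natural map from the thick realisation of its underlying semi-simplicial space to $|N_\bullet\mathrm{Cob}_\theta(d)| = B\mathrm{Cob}_\theta(d)$ is a weak equivalence; and in each degree one may replace the weak inequalities $t_0\le\cdots\le t_p$ by strict ones, a levelwise weak equivalence since in degree $p$ both spaces are disjoint unions of convex regions indexed by the same combinatorial data. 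Thus $B\mathrm{Cob}_\theta(d)\simeq\|X_\bullet\|$, where $X_p\subset\R^{p+1}\times\psi_\theta(\infty,1)$ consists of tuples $(t_0<\cdots<t_p,M)$ with each $t_i$ a regular value of the projection $M\to\R$, the faces forget a $t_i$, and the forgetful map of the theorem becomes the augmentation $\epsilon\colon\|X_\bullet\|\to\psi_\theta(\infty,1)$, $(\underline t,M)\mapsto M$.

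Next I would analyse the fibres of $\epsilon$. For fixed $M$, the preimage $\epsilon^{-1}(M)$ is the thick realisation of the semi-simplicial space $[p]\mapsto\{t_0<\cdots<t_p \text{ regular for } M\to\R\}$, i.e. the topologised order complex of the totally ordered set $R_M\subset\R$ of regular values, which is non-empty by Sard's theorem. Since every finite subset of a totally ordered set is a chain, this order complex is a topologised simplex; a straight-line homotopy dragging all tuples towards a fixed regular value of $M$ exhibits it as weakly contractible even with the subspace topology inherited from $\R$. So $\epsilon$ has non-empty, weakly contractible fibres, and it then suffices to check that $\epsilon$ is a microfibration, for a microfibration with weakly contractible fibres is a weak homotopy equivalence.

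The microfibration property is, I expect, the main obstacle, and it is where Sard's theorem and the topology of $\psi_\theta$ really enter. Given a map $D^n\to\|X_\bullet\|$, its composite $D^n\to\psi_\theta(\infty,1)$, and an extension $D^n\times[0,1]\to\psi_\theta(\infty,1)$, one must lift the latter over $D^n\times[0,\varepsilon)$ for some $\varepsilon>0$; this amounts to propagating a coherent, simplicially parametrised family of ordered regular values as the manifolds move. The quantitative input is that a value $t$ regular for $M$ remains regular for every $M'$ that is $C^1$-close to $M$, with control on the admissible perturbation in terms of the distance of $t$ from the critical values of $M$; combining this with a partition of unity over $D^n$ and a reparametrisation keeping the tuples strictly increasing produces the short lift. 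Everything else is formal. Equivalently, one may package the last two steps as the assertion that $\epsilon$ admits sufficiently many compatible local sections, which is the form of the argument used in \cite{galatiusrandal}.
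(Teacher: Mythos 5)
The paper offers no argument of its own here --- its proof is literally the citation \cite[Theorem 3.10]{galatiusrandal} --- and your outline is essentially the strategy of that cited proof: present $B\mathrm{Cob}_\theta(d)$ via an augmented (semi-)simplicial space over $\psi_\theta(\infty,1)$ and show the augmentation is a weak equivalence using locally chosen regular values and a partition of unity (the ``sufficiently many local sections'' form you mention at the end). Your sketch is therefore consistent with the paper; the only steps needing more care than you give them are the goodness/strict-inequality reductions and the contraction of the fibre (a straight line in the $t_i$ may cross critical values, so one should instead cone off onto a fixed regular value using the simplicial coordinates), but these are precisely the technical points dealt with in \cite{galatiusrandal}.
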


\begin{proof}
	\cite[Theorem 3.10]{galatiusrandal}.
\end{proof}

We now encode the symmetric monoidal structure of $\mathrm{Cob}_\theta(d)$ in terms of a $\Gamma$-structure.

\begin{Lemma}
	The simplicial spaces $\mathrm{Cob}_{\theta(m_+)}(d)$ assemble into a $\Gamma$-object in simplicial spaces
	\[
	\mathrm{Cob}_{\theta(-)}(d): \Gamma^{op} \to \catname S^{\Delta^{op}}.
\]
\end{Lemma}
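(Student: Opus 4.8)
The plan is to promote the relabeling maps on spaces of manifolds to maps of topological posets, and then use that passing to the associated topological category and taking the internal nerve is functorial. Recall that a based map $\sigma\colon m_+\to k_+$ induces a continuous map
\[
\sigma_\ast\colon \psi_{\theta(m_+)}(\infty,1)\to \psi_{\theta(k_+)}(\infty,1),
\]
which sends a labeled manifold $(M,l)$ to the submanifold $M':=l^{-1}\bigl(\coprod_{\sigma^{-1}(k_+\setminus\{\ast\})}X\bigr)\subseteq M$ with its relabeled $\theta$-structure, and that these maps are functorial in $\Gamma^{op}$ by the construction in Lemma~\ref{gammatheta}. The key observation for what follows is that $M'$ is a clopen subset of $M$: it is the union of those connected components of $M$ whose label lies in $\sigma^{-1}(k_+\setminus\{\ast\})$.

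First I would check that $\sigma_\ast$ restricts to a map $D_{\theta(m_+)}\to D_{\theta(k_+)}$. A point of $D_{\theta(m_+)}$ is a pair $(t,M)$ with $t$ a regular value of the projection $p\colon M\to\R$ onto the first coordinate. Since $M'$ is open in $M$, we have $T_xM'=T_xM$ for every $x\in M'$, so the critical points of $p|_{M'}$ are exactly the critical points of $p|_M$ lying in $M'$; hence every regular value of $p|_M$ is a regular value of $p|_{M'}$ (including the case $p^{-1}(t)=\emptyset$), and $(t,M')\in D_{\theta(k_+)}$. The map is order preserving, since $(t,M)\le(t',M')$ forces $t\le t'$ and $M=M'$, whence $(t,\sigma_\ast M)\le(t',\sigma_\ast M)$, and it is continuous as a restriction of $\sigma_\ast$. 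The same considerations apply verbatim to the morphism space, where $\sigma_\ast$ acts by $(t_0,t_1,M)\mapsto(t_0,t_1,\sigma_\ast M)$, and they are compatible with the source and target maps, which only forget regular values. Thus $\sigma_\ast$ induces a continuous functor $\mathrm{Cob}_{\theta(m_+)}(d)\to\mathrm{Cob}_{\theta(k_+)}(d)$, and $\sigma\mapsto\sigma_\ast$ is functorial in $\Gamma^{op}$ because it already is at the level of $\psi_{\theta(-)}(\infty,1)$ and all the additional structure is defined by conditions that $\sigma_\ast$ respects. Composing with the internal nerve, which is a functor from internal categories in $\catname S$ to $\catname S^{\Delta^{op}}$, gives the desired $\Gamma$-object $\mathrm{Cob}_{\theta(-)}(d)\colon\Gamma^{op}\to\catname S^{\Delta^{op}}$; the basepoint $0_+$ goes to the terminal (constant) simplicial space, since $\Psi_{\theta(0_+)}\cong\ast$.

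The only step that is not purely formal is the preservation of regular values under $\sigma_\ast$, and even this is immediate once one observes that $M'$ is open in $M$: deleting components of $M$ can only delete critical points of the height function, never create new ones, so regular values can only improve. Everything else is inherited from the already-established $\Gamma$-space structure on spaces of manifolds.
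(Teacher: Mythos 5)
Your proof is correct and follows essentially the same route as the paper: the $\Gamma$-structure is inherited from the already-established functoriality of $m_+\mapsto\psi_{\theta(m_+)}(\infty,1)$, with the paper defining the induced maps on $k$-simplices via $id\times\sigma_\ast$ on $\R^{k+1}\times\Gamma^{(0)}\psi_\theta(m_+)$. Your verification that $\sigma_\ast$ preserves the regular-value condition (because $M'$ is clopen in $M$, so deleting components only removes critical points) is a detail the paper leaves implicit, and it is a worthwhile addition.
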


\begin{proof}
	For $m_+ \in \Gamma^{op}$ the $k$-simplices are given as subspaces
	\[
	\mathrm{Cob}_{\theta(m_+)}(d)_k \subset \R^{k+1}\times \psi_{\theta(m_+)}(\infty,1) = \R^{k+1}\times \Gamma^{(0)}\psi_\theta(m_+).
\]
Thus for a map $\sigma: m_+ \to n_+$ we define the map
\[
\mathrm{Cob}_{\theta(m_+)}(d) \to \mathrm{Cob}_{\theta(n_+)}(d)
\]
on $k$-simplices to be induced by the map
\[
id\times \sigma_\ast: \R^{k+1}\times \Gamma^{(0)}\psi_\theta(m_+)\to \R^{k+1}\times \Gamma^{(0)}\psi_\theta(n_+)
\]
where $\sigma_\ast$ comes from the functoriality in $\Gamma^{op}$ of the $\Gamma$-space $\Gamma^{(0)}\psi_\theta$. From this description it is clear that the maps just defined are functorial in $\Delta^{op}$ and hence define a map of simplicial spaces.
\end{proof}

\begin{Definition}
	Denote by $\Gamma \mathrm{Cob}_\theta(d)$ the $\Gamma$-object in simplicial spaces
	\begin{align*}
\Gamma \mathrm{Cob}_{\theta(m_+)}(d) & \to \catname S^{\Delta^{op}}\\
	m_+ &\mapsto \mathrm{Cob}_{\theta(m_+)}(d).
\end{align*}
\end{Definition}

Composing with the realization of simplicial spaces we get a functor
\[
B\Gamma \mathrm{Cob}_\theta(d): \Gamma^{op}\to \catname S.
\]
We obtain a $\Gamma$-space by choosing as basepoints the elements $(\underline 0, \emptyset)\in \mathrm{Cob}_\theta(d)_k$ for all $k\in \N$. 

\begin{Lemma}
	The forgetful map induces a levelwise equivalence of $\Gamma$-spaces
	\[
	B\Gamma \mathrm{Cob}_\theta(d) \xrightarrow{\simeq} \Gamma^{(0)}\psi_\theta.
\]
\end{Lemma}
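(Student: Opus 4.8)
The plan is to verify this statement by combining the forgetful map of Theorem \ref{maingal} with the $\Gamma$-structures just introduced, and checking that the former is a map of $\Gamma$-objects before realizing. Concretely, for each $m_+ \in \Gamma^{op}$ we have the forgetful map of simplicial spaces $\mathrm{Cob}_{\theta(m_+)}(d) \to \psi_{\theta(m_+)}(\infty,1)$, where the target is viewed as a constant simplicial space, given on $k$-simplices by $(\underline t, M) \mapsto M$. The first step is to observe that these maps are natural in $\Gamma^{op}$: for a based map $\sigma: m_+ \to n_+$, the induced map on the cobordism side is $id \times \sigma_\ast$ on the subspace of $\R^{k+1} \times \Gamma^{(0)}\psi_\theta(m_+)$, and the induced map on $\psi_{\theta(m_+)}(\infty,1) = \Gamma^{(0)}\psi_\theta(m_+)$ is exactly $\sigma_\ast$, so the obvious square commutes on the nose. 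Hence the forgetful maps assemble into a map of $\Gamma$-objects in simplicial spaces $\Gamma \mathrm{Cob}_\theta(d) \to \Gamma^{(0)}\psi_\theta$ (the latter regarded as a constant simplicial object levelwise).

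Next I would realize. Geometric realization of simplicial spaces preserves products up to weak equivalence and is a functor, so applying it levelwise produces a map of $\Gamma$-spaces $B\Gamma \mathrm{Cob}_\theta(d) \to \Gamma^{(0)}\psi_\theta$, where on the right the realization of the constant simplicial space $\psi_{\theta(m_+)}(\infty,1)$ is (canonically homotopy equivalent to, in fact equal to) $\psi_{\theta(m_+)}(\infty,1)$ itself. One must take a tiny bit of care that the chosen basepoints $(\underline 0, \emptyset)$ are compatible under the forgetful map with the basepoint $\emptyset \in \psi_{\theta(m_+)}(\infty,1)$, which is immediate. So we get a based map of $\Gamma$-spaces, and it only remains to check it is a levelwise weak equivalence.

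Finally, levelwise weak equivalence is precisely the content of Theorem \ref{maingal} applied to the Serre fibration $\theta(m_+)$ in place of $\theta$: the forgetful map $B\mathrm{Cob}_{\theta(m_+)}(d) \xrightarrow{\simeq} \psi_{\theta(m_+)}(\infty,1)$ is a weak homotopy equivalence for every $m_+$. Since $\Psi_{\theta(m_+)}$ and all the constructions feeding into \cite{galatiusrandal} go through verbatim for an arbitrary Serre fibration over $BO(d)$ — and $\theta(m_+) = \coprod_m \theta$ is such a fibration — nothing new needs to be proven here. Evaluating the map of $\Gamma$-spaces at $m_+$ recovers exactly this forgetful equivalence, completing the argument.

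The main obstacle, such as it is, is bookkeeping rather than mathematics: one must confirm that the $\Gamma$-structure on $\mathrm{Cob}_\theta(d)$ defined via relabeling components is genuinely compatible, simplicial-level by simplicial-level, with the $\Gamma$-structure on $\Gamma^{(0)}\psi_\theta$, i.e. that the forgetful map intertwines ``relabel and discard $\ast$-labeled components, keeping regular values'' with ``relabel and discard $\ast$-labeled components.'' This is exactly how both $\Gamma$-structures were defined, so the square commutes strictly; the only subtlety worth a sentence is that discarding components never destroys the property of $t$ being a regular value, so the map on $k$-simplices is well-defined. Everything else — naturality in $\Delta^{op}$, behaviour of realization, the equivalence itself — is either already recorded above or an immediate consequence of it.
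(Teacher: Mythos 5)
Your argument is correct and is essentially the paper's own proof: one checks that the forgetful maps are natural in $\Gamma^{op}$ (which holds strictly by the way both $\Gamma$-structures are defined via relabeling), and then invokes Theorem \ref{maingal} applied to each $\theta(m_+)$ to see the map is a levelwise weak equivalence. Your additional remarks on basepoints and on regular values being preserved are fine but only make explicit what the paper leaves as ``clear by construction.''
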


\begin{proof}
	By construction it is clear that the forgetful maps are functorial in $\Gamma^{op}$ so that they indeed define a map of $\Gamma$-spaces. By theorem \ref{maingal}, these maps are weak equivalences hence we obtain a levelwise eqiuvalence of $\Gamma$-spaces.
\end{proof}

In particular, the $\Gamma$-space $B\Gamma \mathrm{Cob}_\theta(d)$ is very special and applying Segal's functor we obtain a connective $\Omega$-spectrum, which we denote by $\mathbf B \Gamma \mathrm{Cob}_\theta(d)$ to avoid akward notation. In conclusion, we obtain an equivalence of spectra
\[
\mathbf B \Gamma \mathrm{Cob}_\theta(d) \xrightarrow{\simeq} \mathbf B \Gamma^{(0)}\psi_\theta.
\]
Combining with theorem \ref{main1}, we obtain our main theorem.

\begin{thm}
	There are stable equivalence of spectra
	\[
	\mathbf B \Gamma \mathrm{Cob}_\theta(d) \simeq \mathbf B\Gamma^{(0)}\psi_\theta \simeq MT\theta(d)[1]_{\geq 0},
\]
such that the induced equivalences
\[
  \Omega^\infty \mathbf B \Gamma \mathrm{Cob}_\theta (d) \simeq \Omega^\infty \psi_\theta \simeq \Omega^\infty MT \theta(d)[1]
\]
are equivalent to the equivalences of theorem \ref{maingalran} and theorem \ref{maingal}.
\end{thm}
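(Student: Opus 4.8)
The plan is to bolt together the levelwise equivalences of very special $\Gamma$-spaces constructed in the previous sections into a single zig-zag, apply the functor $\mathbf B$ (which, since the paragraph after Lemma \ref{key}, abbreviates $\mathbf B W$) to it, and then identify the resulting equivalences of $0$-th spaces by hand.

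\emph{Step 1: the spectrum-level equivalences.} First I would record the zig-zag of levelwise equivalences of $\Gamma$-spaces
\[
B\Gamma\mathrm{Cob}_\theta(d) \xrightarrow{\simeq} \Gamma^{(0)}\psi_\theta \xrightarrow{\simeq} \Gamma\Psi_\theta \xleftarrow{\simeq} \Gamma^{(1)}MT\theta(d),
\]
where the first map is the forgetful equivalence of the preceding lemma, the second is the stabilization equivalence preceding the corollary to Proposition \ref{proppsi}, and the third is the equivalence of Lemma \ref{gammaequivalence}. All four $\Gamma$-spaces are very special: for the three involving $\psi$ and $MT$ this was shown in Propositions \ref{proppsi} and \ref{propmt} via Lemma \ref{key}, and $B\Gamma\mathrm{Cob}_\theta(d)$ is very special because it is levelwise equivalent to $\Gamma^{(0)}\psi_\theta$. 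Applying $W$ and then Segal's functor turns each arrow into a stable equivalence of connective $\Omega$-spectra by Lemma \ref{cofibrant} (its third bullet ensures this is independent of the chosen zig-zag). Combining with the identifications $\mathbf B\Gamma^{(0)}\psi_\theta \simeq \psi_{\theta,\geq 0}$ (Proposition \ref{proppsi} and its corollary), $\mathbf B\Gamma^{(1)}MT\theta(d) \simeq MT\theta(d)[1]_{\geq 0}$ (Proposition \ref{propmt} together with the shift), and Theorem \ref{main1}, this produces the claimed stable equivalences
\[
\mathbf B\Gamma\mathrm{Cob}_\theta(d) \simeq \mathbf B\Gamma^{(0)}\psi_\theta \simeq MT\theta(d)[1]_{\geq 0}.
\]

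\emph{Step 2: passing to $0$-th spaces.} For any very special $A \in \Gamma\catname S_\ast$, Lemma \ref{cofibrant} makes $\mathbf B W A$ a connective $\Omega$-spectrum, so that $\Omega^\infty \mathbf B W A = (\mathbf B W A)_0 = L_{\mathbb S} W A(S^0) = W A(1_+)$, which carries the natural weak equivalence $W A(1_+) \xrightarrow{\simeq} A(1_+)$ coming from Lemma \ref{cofibrant}. Evaluating the zig-zag of Step 1 at $1_+$ then identifies $\Omega^\infty \mathbf B\Gamma\mathrm{Cob}_\theta(d)$ with $B\mathrm{Cob}_\theta(d)$ (taking basepoints $(\underline 0,\emptyset)$), $\Omega^\infty\mathbf B\Gamma^{(0)}\psi_\theta$ with $\psi_\theta(\infty,1)=\psi_{\theta,0}=\Omega^\infty\psi_\theta$, and $\Omega^\infty\mathbf B\Gamma^{(1)}MT\theta(d)$ with $\Omega^\infty MT\theta(d)[1]$; in the last case one also uses that $\Omega^\infty MT\theta(d)[1]_{\geq 0} \to \Omega^\infty MT\theta(d)[1]$ is a weak equivalence of spaces, since passing to the connective cover alters only the negative homotopy groups. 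Now by construction the $1_+$-component of the first arrow of the zig-zag is precisely the forgetful map $B\mathrm{Cob}_\theta(d) \to \psi_\theta(\infty,1)$ of Theorem \ref{maingal}, and the $1_+$-component of the third arrow, composed with the stabilization map, is precisely the equivalence $\Omega^\infty MT\theta(d)[1] \xrightarrow{\simeq} \colim_n \Omega^{n-1}\Psi_\theta(\R^n) \xleftarrow{\simeq} \psi_\theta(\infty,1)$ of Theorem \ref{maingalran}. Hence the space-level equivalences obtained from Step 1 by applying $\Omega^\infty$ agree, up to homotopy, with those of Theorems \ref{maingal} and \ref{maingalran}.

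\emph{The main obstacle.} The force of the statement — and the only non-formal point — is the compatibility claim at the end of Step 2: one must check that the identification $\Omega^\infty \mathbf B W A \simeq A(1_+)$ is natural in $A$ and compatible with the structure maps $\mathbf B W\Omega A \to \Omega\mathbf B W A$ of Lemma \ref{switch}, so that the homotopy relating the $0$-th space of the spectrum-level map to the original space-level map is coherent all along the zig-zag and through the identifications supplied by Propositions \ref{proppsi}, \ref{propmt} and the up-and-across argument inside Lemma \ref{key}. Concretely this comes down to the facts that $\mathbf B W A$ agrees with $A$ on the nose in the variable $1_+ = S^0$ and that $W A \to A$ is the identity there; chasing this through the proof of Lemma \ref{key} shows that the spectrum-level maps restrict on $0$-th (respectively $1$st) spaces to the maps of the cited theorems. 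Everything else — specialness, very-specialness, and the mere existence of the stable equivalences — has already been established in the preceding sections.
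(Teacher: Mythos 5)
Your proposal is correct and follows essentially the same route as the paper: assemble the levelwise equivalences of very special $\Gamma$-spaces $B\Gamma\mathrm{Cob}_\theta(d)\to\Gamma^{(0)}\psi_\theta\to\Gamma\Psi_\theta\leftarrow\Gamma^{(1)}MT\theta(d)$, apply $W$ and Segal's functor, and invoke Propositions \ref{proppsi}, \ref{propmt} and Theorem \ref{main1}. Your Step 2 actually spells out the identification of the $0$-th space maps with those of Theorems \ref{maingal} and \ref{maingalran} (via $\Omega^\infty\mathbf BWA = WA(1_+)\simeq A(1_+)$ and $S^0=1_+$) in more detail than the paper, which leaves this compatibility implicit in the constructions.
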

\bibliographystyle{plain}
\bibliography{infiniteloop}
\end{document}